\documentclass[12pt, reqno]{amsart}

\title[Nonexistence of global solutions]{Nonexistence of solutions of certain semilinear heat equations}

\tolerance=1
\emergencystretch=\maxdimen
\hyphenpenalty=10000
\hbadness=10000

\author[D. Suragan and B. Talwar]{Durvudkhan Suragan and Bharat Talwar}
\address{Durvudkhan Suragan, Department of Mathematics, Nazarbayev University, Astana 010000, Kazakhstan}
\email{durvudkhan.suragan@nu.edu.kz}
\address{Bharat Talwar, Department of Mathematics, Indian Institute of Science Education and Research, Bhopal 462066, Madhya Pradesh, India}
\email{btalwar.math@gmail.com, bharattalwar@iiserb.ac.in}

\allowdisplaybreaks[1]

\usepackage{relsize}
\usepackage{amsmath,amsfonts,amssymb,amsthm,mathrsfs}
\usepackage{hyperref}
\usepackage{cleveref}
\usepackage[margin=2.1 cm]{geometry}
\usepackage{setspace}
\usepackage{mathtools}  

\setstretch{1}

\usepackage{setspace}
\usepackage{xcolor}

\allowdisplaybreaks[1]

\numberwithin{equation}{section}
\newtheorem{theorem}{\bf Theorem}[section]

\newtheorem{cor}[theorem]{\bf Corollary}
\newtheorem{remark}[theorem]{\bf Remark}

\newtheorem{defin}[theorem]{\bf Definition}


\newcommand{\dive}{\text{div}}
\newcommand{\seq}{\subseteq}

\newcommand{\N}{\mathbb{N}}

\newcommand{\R}{\mathbb{R}}

\newcommand{\ol}{\overline}

\newcommand{\norm}[1]{\| #1 \|}

\makeatletter \@namedef{subjclassname@2020}{\textup{2020} Mathematics Subject Classification} \makeatother
\subjclass[2020]{35B33, 35B44}

\begin{document}
	\begin{abstract}
		We consider a semilinear heat equation involving a forcing term which depends only on the space variable.
		To start with, the existence of a local mild solution is proved through an application of the Banach fixed-point theorem.
		With the help of carefully defined test functions, we then prove the nonexistence of global weak solutions.
		The most crucial step is to find the function $d(x)$ used in our proofs, which seems to depends only upon the considered vector fields.
		This leads to lower bounds for a possible critical Fujita-type exponent.
		The same function $d(x)$ could lead to a potential norm function which would be most suitable while working with these vector fields. 
		\Cref{Section4} is the attraction of this paper in which we apply our approach to all of the vector fields discussed by Biagi, Bonfiglioli and Bramanti in \cite{Biagi} giving rise to Grushin-type and Engel-type PDOs, and more.
		An upper bound for the blow-up time of local solutions is also provided in each of these cases.
	\end{abstract}
	\keywords{Fixed point theorem, blow-up, local solution, global solution, weak solution, Grushin-type PDO, Engel-type PDOs.}
	\maketitle
	
	\section{Introduction}
	Let $X = (X_1, \dots, X_m)$ be a system of $C^\infty$ real vector fields on $\R^n$.
	We study the operator $$\Delta_X = - \sum_{k=1}^m X_k^\ast X_k,$$  where $X_k^\ast = -X_k - \dive X_k$.
	The operator $-\Delta_X$ is symmetric on $C_0^\infty(\R^n)$ and well-defined on $$\{ u \in H^1_{X,0}(\R^n): \Delta_X u \in L^2(\R^n) \},$$ where
	$H^1_{X,0}(\R^n)$ is the closure of $C_0^\infty(\R^n)$ in $H^1_{X}(\R^n) := \{ u \in L^2(\R^n): X_i u \in L^2(\R^n), 1 \leq i \leq m \}$ which is endowed with the norm given by the formula $$\norm{u}^2 = \int_{R^n}|u|^2(x) dx + \sum_{i=1}^m \int_{\R^n} |X_i u|^2(x) dx.$$

	For $p > 1$ and $n > 2$ (except when mentioned otherwise), we consider the problem of finding conditions for the non-existence of solutions for
	\begin{equation}\label{MainProblem}
		\begin{cases} 
			u_t(t,x) - \Delta_X u(t,x) = |u(t,x)|^p + f(x), & (t,x) \in (0,T) \times \R^n \\
			u(0, x) = u_0(x), & x \in \R^n,
		\end{cases}
	\end{equation}
	where the assumptions on the forcing term $f$ will be described as and when required in the due course.
	The complexity in finding the existence of a solution for such a problem is that while a time-dependent solution $u$ is sought, the function $u_t(t,x) - \Delta_X u(t,x) - |u(t,x)|^p$ is demanded to be independent of time.
	The most straightforward approach then is to seek a stationary solution i.e., one that remains unaffected by change in time.
	By doing so, the term $u_t(t,x)$ becomes irrelevant in the context of this problem.

	We focus solely on describing the conditions under which there is no global solution to the problem \ref{MainProblem}.
	A notion that becomes important for this goal is the notion of a critical exponent, which, if it exists, is the least real number $p_c$ such that for every $p > p_c$, the existence of a global solution is guaranteed for some $f$ and $u_0$; and a global solution never exists for $p \leq p_c$.

	The literature on similar problems is vast, originating from the seminal works \cite{Fujita} and \cite{Hayakawa, Kobayashi}, where Fujita and others considered the case $f \equiv 0$ and established that the critical exponent is $\frac{n+2}{n}$ when $X_k=\partial_{k} \text{ and } k=\overline{1,n}$.
	For non-zero $f$, Zhang \cite{Zhang} considered a system similar to (\ref{MainProblem})  on a Riemannian manifold ${\bf M}^n$ with possibly non-negative Ricci curvature.
	There $\frac{\alpha}{\alpha - 2}$ for some $\alpha$ was proved to be the critical exponent, where $\alpha \leq n$ in general and $\alpha = n$ when ${\bf M}^n = \R^n$.
	Bandle et al.~proved in \cite[Theorem 2.1]{Bandle}  that if one replaces $\Delta_X$ with Laplacian $\Delta$  in (\ref{MainProblem}), then the critical exponent is $\frac{n}{n-2}$.
	As $\frac{n}{n-2} > \frac{n+2}{n}$, it shows that one can choose $p$ from a bigger interval $(1, \frac{n}{n-2}]$ and still the nonexistence of a global solution is guaranteed when a suitable forcing term $f$ is involved.
	Another interesting phenomenon observed during similar studies is that a solution exists for some finite time and then it blows-up in the supremum norm, hindering the global existence.
	Determining the blow-up time remains a challenging question in the field.
	Ever since \cite{Fujita} appeared, numerous results concerning existence, nonexistence, and blow-up phenomena have emerged not only on $\R^n$
	but also on general topological groups. These investigations encompass studies on several domains -see \cite{Torebek, WangYinYou, RuzhanskyYessirkegenov, SunLiuLi, GeorgievPalmieri,JleliKawakamiSamet,Pohozaev,Pascucci,Bernard,Levine,BarasPierre} and references therein.

	While $\Delta_X$ is well-studied with an extra H\"ormander condition (H) on $X$ which is described below, we do not impose any such restriction.
	Nevertheless, this condition is met by a class of operators we investigate in \Cref{Section4}.
	It is noteworthy that the counterpart of $\Delta_X$ on bounded domains  was recently studied in \cite{ChenChen}, along with the H\"ormander condition.
	
	\smallskip
	
	\noindent (H): There exists a natural number $r$ such that the vector fields $X_1, \dots, X_m$ together with their $r$ commutators span the tangent space at each point of $\R^n$.
	
	\smallskip

	It is crucial to note that in exploring potential critical exponents for problems akin to (\ref{MainProblem}), existing literature extensively utilizes the geometry associated with the pair $(\R^n,X)$.
	However, the primary aim of this paper is to push the boundaries without relying on such geometric considerations.
	Consequently, we depart from the extensively studied class of vector fields and consider different systems of vector fields.
	While \Cref{Section3} serves to bolster confidence in our approach, the major attraction of this paper is \Cref{Section4} which addresses more complex and significant vector fields by appealing to the proof approach of \Cref{Section3}.
	Our techniques and results draw inspiration from \cite{Torebek, JleliKawakamiSamet} and choosing suitable function $d(x)$ is the crucial part of our proofs.
	The major attractions of this article are as follows.
	\begin{enumerate}
		\item The local existence result detailed in \Cref{LocalExistence}.
		\item Lower bounds for a possible critical exponents provided in \Cref{Section4}.
		\item An upper bound on the blow up time provided as different corollaries in Sections 3 $\&$ 4.
	\end{enumerate}

	Let us briefly summarize the layout and main results of this paper.	
	As an application of the Banach fixed point theorem,  in \Cref{LocalExistence} we prove the existence and blow-up of a local in time mild solution.	
	In \Cref{Section3} we work with the assumption that for $X_k = \sum_{i=1}^n a_{k,i} \frac{\partial}{\partial x_i}$ and $1 \leq k \leq m$, the functions $a_{k,i}$ and $\frac{\partial{a_{k,i}}}{\partial{x_j}}$ are bounded.
	This holds, for example, for sine, cosine and constant functions which are important from harmonic analysis point of view.
	For locally integrable function $f$ with finite and positive integration over the space, it is proved through Theorems  \ref{subcritical} and \ref{critical} that a lower bound for the possible critical exponent in (\ref{MainProblem}) is $\frac{n}{n-1}$.
	An upper bound on the time of blow-up is also provided in \Cref{BlowUpTime}.
	In \Cref{Section4}, we consider different vector fields, including all the vector fields $X$ discussed in \cite{Biagi}, which give rise to the so-called Grushin-type and Engel-type  partial differential operators.
	For details on the significance of these operators, we refer the reader to \cite{BiagiBonfoglioli, Bauer} and references therein.
	A noteworthy information about some of these operators is that they do not give rise to a stratified Lie group structure on the base spaces under consideration.

	Throughout the paper we assume the  existence (see \cite{VSC}) of a non-trivial and non-negative function $h_t(x,y)$ satisfying $\int_{\R^n} h_t(x,y) dy \leq 1$ and making $\{ e^{-t \Delta_X} \}_{t \geq 0}$ a semigroup with infinitesimal generator $-\Delta_X$, such that $$e^{-t \Delta_X} w(x) := \int_{\R^n} h_t(x,y) w(y) dy$$ for every $x \in \R^n$ and $w \in C_0(\R^n)$.
	As a consequence, we have $$\norm{e^{-t \Delta_X} w(x)} \leq  \int_{\R^n} h_t(x,y) \norm{w(y)} dy \leq \norm{w}_{\sup}$$ for every $x \in \R^n$ and $t > 0$.
	This property of the semigroup $\{ e^{-t \Delta_X} \}_{t \geq 0}$ will be crucial to the proof of \Cref{LocalExistence}.
	Note that the so-called heat-kernel satisfies these properties and its existence is guaranteed in many general cases, for example, \cite[Theorem 10.1]{Li} provides the existence on compact manifolds with boundary and \cite[Chapter 4]{VSC} gives existence on nilpotent Lie groups.
	On $\R^n$,  existence of heat-kernel for $\Delta_X - \frac{\partial}{\partial t}$ is mentioned in \cite{KusuokaStroock} and in \cite{Biagi2} global estimates for heat kernel are given for some of the  operators we study in \Cref{Section4}.

	\section{Local existence} \label{LocalExistenceSection}
	Let us first fix some notations.
	We denote the space of all continuous functions on $\R^n$ which vanish at infinity by $C_0(\R^n)$.
	For a Banach space $(Y, \norm{\cdot})$, the space of all $Y$-valued continuous and vanishing at infinity functions on a locally compact Hausdorff topological space $S$ is denoted by $C_0(S,Y)$.
	This space is a Banach space when norm of $w \in  C_0(S,Y)$ is given by $$\norm{w}_{\sup} = \sup \{ \norm{w(s)} : s \in S \}.$$
	Similarly, $C_c(S,Y)$ will denote the normed space of compactly supported $Y$-valued continuous functions on $S$.
	We set $x = (x_1, x_2, \dots, x_n)$ and likewise $x' = (x_1', x_2', \dots, x_n')$.

	\begin{defin}(Mild solution)
		A local mild solution of (\ref{MainProblem}) is a function $u \in C([0,T], C_0(\R^n))$ for which $$u(t) = e^{-t \Delta_X} u(0) + \int_{0}^t e^{-(t-s) \Delta_X} (|u(s)|^p + f) ds,$$ for any $ t \in [0,T)$.
		The function $u$ is called a global mild solution of (\ref{MainProblem}) if $T = + \infty$.
	\end{defin}

	\begin{theorem}\label{LocalExistence}
		Let $u_0, f \in C_0(\R^n)$. Then the following holds.
		\begin{enumerate}
			\item There exists a time $T \in (0, \infty)$ such that (\ref{MainProblem}) has a unique local mild solution. 
			Moreover, this solution is bounded by twice the maximum of the supremum norms of $u_0$ and $f$.
			\item The solution in (1) can be uniquely extended to a maximal interval $[0, T_{\max})$.
			If $T_{\max} \in \R$, then supremum norm of $u(t) \in C_0(\R^n)$ approaches infinity as $t$ tends to $T_{\max}$.
			\item If $u_0, f \in L^q(\R^n)$ where $q \in [1, \infty]$, then $$u \in C([0,T_{\max}), C_0(\R^n)) \cap C([0,T_{\max}), L^q(\R^n)).$$
		\end{enumerate}
	\end{theorem}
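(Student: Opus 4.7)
The plan is to set up a standard Banach-fixed-point/Picard scheme on the complete metric space
\[
E_T = C([0,T],C_0(\R^n))
\]
equipped with the sup-norm $\|u\|_{\sup} = \sup_{t\in[0,T]} \|u(t)\|_\infty$, and to exploit the only semigroup estimate we have been granted, namely the pointwise contractivity $\|e^{-t\Delta_X}w\|_\infty \le \|w\|_\infty$. Define
\[
\Phi(u)(t) = e^{-t\Delta_X} u_0 + \int_0^t e^{-(t-s)\Delta_X}\bigl(|u(s)|^p + f\bigr)\,ds.
\]
First I would verify that $\Phi$ is well-defined as a map $E_T\to E_T$: the integrand is a continuous $C_0(\R^n)$-valued function of $s$, and (using strong continuity of the semigroup on $C_0(\R^n)$, which follows from the kernel assumption together with approximation of $w \in C_0(\R^n)$ by $C_c^\infty$ data) the map $t\mapsto \Phi(u)(t)$ is continuous in the sup-norm.

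For part (1), choose $R = 2(\|u_0\|_\infty + \|f\|_\infty + 1)$ and consider the closed ball $B_R\subset E_T$. Using the contractivity of $e^{-t\Delta_X}$ on $C_0(\R^n)$,
\[
\|\Phi(u)(t)\|_\infty \le \|u_0\|_\infty + T\bigl(R^p + \|f\|_\infty\bigr),
\]
so $\Phi(B_R)\subset B_R$ if $T(R^p+\|f\|_\infty) \le R/2$. For the Lipschitz estimate, the elementary pointwise inequality $\bigl||a|^p - |b|^p\bigr|\le p(|a|+|b|)^{p-1}|a-b|$ combined with the same semigroup bound yields
\[
\|\Phi(u) - \Phi(v)\|_{\sup} \le T\,p\,(2R)^{p-1}\,\|u-v\|_{\sup}.
\]
Shrinking $T$ further so that $T p (2R)^{p-1} < 1$, Banach's fixed point theorem furnishes a unique $u \in B_R$ with $\Phi(u)=u$. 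Uniqueness on all of $E_T$ (not merely inside $B_R$) follows by the standard Gronwall-type argument applied to $|u(t)|^p-|v(t)|^p$ on subintervals.

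For part (2), define $T_{\max} = \sup\{T>0 : \text{a mild solution on }[0,T]\text{ exists}\}$, and glue solutions on overlapping intervals using uniqueness. The characterization of $T_{\max}$ is the usual continuation/blow-up dichotomy: if $T_{\max}<\infty$ but $\liminf_{t\to T_{\max}^-}\|u(t)\|_\infty = M<\infty$, pick $t^\ast < T_{\max}$ with $\|u(t^\ast)\|_\infty \le M+1$ and rerun the local existence from the initial datum $u(t^\ast)$; since the minimum time in (1) depends only on $\|u(t^\ast)\|_\infty$ and $\|f\|_\infty$, the solution extends strictly past $T_{\max}$, contradiction.

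For part (3), I would run the same Picard iteration in the intersection space $C([0,\tau],C_0(\R^n)\cap L^q(\R^n))$, once I know the semigroup also satisfies $\|e^{-t\Delta_X}w\|_q \le \|w\|_q$ for $w\in L^q(\R^n)$: when $q=1$ this is $\int h_t(x,y)\,dx \le 1$, which follows from self-adjointness of $\Delta_X$ together with the hypothesis $\int h_t(x,y)\,dy \le 1$, and the case $1<q<\infty$ then follows by Riesz--Thorin interpolation between $L^1$ and $L^\infty$. Using $\||u|^p\|_q \le \|u\|_\infty^{p-1}\|u\|_q$, the Picard iterates converge in both norms simultaneously, so the mild solution from (1) and (2) lies in $C([0,T_{\max}),L^q(\R^n))$ as well. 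The main obstacle I anticipate is the technical verification that $\Phi$ genuinely lands in $C_0$-valued continuous maps (i.e.\ strong continuity of $\{e^{-t\Delta_X}\}$ on $C_0(\R^n)$ and preservation of the $C_0$ class by the Duhamel integral), since the hypotheses in the paper supply only the kernel contraction bound; after that point the scheme is standard.
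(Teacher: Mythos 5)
Your proposal is correct and follows essentially the same route as the paper: a Banach fixed point argument on a ball in $C([0,T],C_0(\mathbb{R}^n))$ using the kernel contraction bound, the standard continuation dichotomy for $T_{\max}$ exploiting that the local existence time depends only on the sup-norm of the data, and rerunning the scheme in the intersection with $L^q$ for part (3). Your justification of the $L^q$-contractivity of the semigroup via self-adjointness of $\Delta_X$ and Riesz--Thorin interpolation is in fact slightly more careful than the paper's, which simply treats $e^{-t\Delta_X}$ as convolution with an $L^1$ kernel.
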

	\begin{proof}
		1. Let us define a metric space $$\Theta = \{ u \in C([0,T], C_0(\R^n)) : \norm{ u }_{\sup} \leq 2 \delta(u_0,f), u(0) = u_0 \}$$ where $\delta(u_0,f) = \max \{ \norm{u_0}_{\sup}, \norm{f}_{\sup} \}$.
		The distance between two elements of $\Theta$ is naturally given by the supremum norm of their difference. However, for convenience, we denote this distance between $u,v \in \Theta$ by $\norm{u-v}_\Theta$.
		As convergence in supremum norm implies pointwise convergence, $\norm{ \cdot }_\Theta$ defines a metric on $\Theta$ making it a complete metric space.
		Define $\Phi: \Theta \to \Theta$ such that $$\Phi(v)(t) = e^{-t \Delta_X} v(0) + \int_{0}^t e^{-(t-s) \Delta_X} (|v(s)|^p + f) ds.$$
		We show that for an appropriate $T > 0$, $\Phi$ is a well-defined contraction. Then the Banach fixed point theorem for complete metric spaces may be applied to obtain a unique function $u \in \Theta$ such that $\Phi(u) = u$ which will be the required solution.

		It may still happen that another mild solution exists outside of $\Theta$.
		To show that this is not the case, we make use of the Gronwall's inequality and apply it to the definition of mild solution.
		Recall that  the Gronwall's inequality claims that if $$\gamma(t) \leq \alpha(t) + \int_{0}^t \beta(s) \gamma(s) ds$$ for some continuous functions $\alpha, \beta$ and $\gamma$, then $$\gamma(t) \leq \alpha(t) + \int_{0}^t \alpha(s) \beta(s) \exp^{\int_{s}^t \beta(r) dr} ds.$$
		If $u, v$ are two mild solutions, then we can use $\alpha = 0$ and $\beta = C(p) \max \{\norm{u^{p-1}}, \norm{u^{p-1}} \}$ and $\gamma(t) = \norm{ u(t) - v(t)}$ in Gronwall's inequality to obtain that $u =  v$.
		
		To complete the proof of (1), we now proceed towards proving that $T$ is a well defined contraction on $\Theta$.
		
		(i) Well-defined:  We need to prove that $\Phi(v) \in \Theta$ whenever $v \in \Theta$.
		In other words, we must prove that for $v \in \Theta$ we have $\Phi(v) \in C([0,T], C_0(\R^n))$, $\norm{\Phi(v)} \leq 2 \delta(u_0,f)$ and $\Phi(v)(0) = u_0$.
		It is clear that $\Phi(v)$ is a continuous function as it is composition of several continuous functions.
		Also, $\Phi(v)(0) = v(0) = u_0$.
		As $e^{-t \Delta_X} w(\cdot) = \int_{\R^n} h_t(\cdot,y) w(y) dy$ on  $\R^n$ for every $t > 0$, we have  \begin{eqnarray*}
			\lVert\Phi(v)(t)\rVert_{\sup} &=& \left\lVert e^{-t \Delta_X} v(0) + \int_{0}^t e^{-(t-s) \Delta_X} (|v(s)|^p + f) ds \right\rVert_{\sup} \\
			&\leq & \norm{e^{-t \Delta_X} v(0)}_{\sup} + \int_{0}^t \norm{e^{-(t-s) \Delta_X} (|v(s)|^p + f)}_{\sup} ds \\
			&\leq & \norm{v(0)}_{\sup} + t \norm{ |v(s)|^p + f}_{\sup}.
		\end{eqnarray*}
		Thus, $$\norm{\Phi(v)}_{\sup} \leq \norm{v(0)}_{\sup} + t \norm{ |v(s)|^p}_{\sup} + t \norm{f}_{\sup} \leq (1+t)\delta(u_0,f)+ t 2^p \delta(u_0,f)^p$$ and one may choose $T$ small enough so that $$\norm{\Phi(v)}_{\sup} \leq  (1+T+ 2^{p-1}T\delta(u_0,f)^{p-1} )\delta(u_0,f) \leq 2 \delta(u_0,f).$$

		Next we prove that for every $t \in [0,T]$, we have $\Phi(v)(t) \in C_0(\R^n)$.
		Let $1 >\epsilon >0$ be given.
		For each fixed $t \in [0, T]$, as $f$ and $v(t)$ vanish at infinity, there exists a compact subset $K_{t, \epsilon} \seq \R^n$ such that for $x \in \R^n \setminus K_{t, \epsilon}$ both $|f(x)|$ and  $|v(t)(x)|$ are less than $\epsilon/2$.
		Using continuity of $v$, choose a neighbourhood $V_t$ of $t \in [0, T]$ such that $|v(s)(x)| \leq \epsilon$ for every $x \in \R^n \setminus K_{t, \epsilon}$ and $s \in V_t$.
		Since $[0,T]$ is compact, there exists $t_1, t_2, \dots, t_l \in [0,T]$ such that $[0,T] = \cup_{i=1}^l V_{t_i}$.
		Then for $x \in \R^n \setminus \cup_{i=1}^l K_{t_i, \epsilon}$, we have \begin{eqnarray*}
			|\Phi(v)(t)(x)| &=& \left|e^{-t \Delta_X} v(0)(x) + \int_{0}^t e^{-(t-s) \Delta_X} (|v(s)(x)|^p + f(x)) ds \right| \\
			& \leq&  \epsilon + T \epsilon^p + \int_{0}^t |e^{-(t-s) \Delta_X} f(x)| ds \\
			& \leq&  \epsilon + T \epsilon^p + T \epsilon \leq (1+ 2T) \epsilon.
		\end{eqnarray*}
		As $\cup_{i=1}^l K_{t_i, \epsilon}$ is compact, we have established that $\Phi(v)(t) \in C_0(\R^n)$ for every $t \in [0,T]$.

		(ii) Contraction: Let $u,v \in \Theta$.
		As $u(0) = v(0)$, for some $C(p)>0$ we have
		\begin{eqnarray*}
			\norm{\Phi(u)(t) - \Phi(v)(t)} &=&  \left\lVert e^{-t \Delta_X} u(0) - e^{-t \Delta_X} v(0) + \int_{0}^t e^{-(t-s) \Delta_X} (|u(s)|^p  -|v(s)|^p) ds \right\rVert \\
			&\leq&  C(p) \int_{0}^t \norm{e^{-(t-s) \Delta_X}|u(s)  -v(s)|(|u(s)|^{p-1}+ |v(s|^{p-1})} ds\\
			&\leq& C(p) 2^{p} \delta(u_0,f))^{p-1} \int_{0}^t \norm{u(s)  -v(s)}ds\\
			&\leq &  C(p) 2^{p} \delta(u_0,f)^{p-1} t  \norm{u-v}_\Theta.
		\end{eqnarray*}
		One can now choose $T$ small enough so that $\Phi$ is a contraction on $[0,T]$.
		This shows that a unique solution exists in $\Theta$.

		2. Existence of $[0, T_{\max})$ follows from the uniqueness part of (1).
		Set $u$ to be this unique solution.
		Let us now assume that $T_{\max}$ is finite and there exists $K > 0$ such that $\norm{u(t)}_\Theta \leq K$ for every $0 \leq t < T_{\max}$.
		By demonstrating the existence of a mild solution for (\ref{MainProblem}) in $[0, \tau)$, for some $\tau > T_{\max}$, we will contradict the maximality of $T_{\max}$.
		
		Fix $t^\ast \in (\frac{T_{\max}}{2}, T_{\max})$, $T < T_{\max}$ and define a complete metric space $$\Theta_1 = \{ v \in C([0,T], C_0(\R^n)) : \norm{ v }_{\sup} \leq 2 \delta(K,f),v(0) = u_{t^\ast} \},$$ where $\delta(K,f) = \max \{K, \norm{f}_{\sup} \}$ and norm is denoted by $\norm{ \cdot }_{\Theta_1}$.
		Then just as in (1), the operator $\Phi_1: \Theta_1 \to \Theta_1$ given by $$\Phi_1(v) = e^{-t \Delta_X} u_{t^\ast} + \int_{0}^t e^{-(t-s) \Delta_X} (|v(s)|^p + f) ds$$ is a well-defined contraction.
		From the Banach fixed point theorem, there exists a fixed point $v$ of $\Theta_1$.
		
		Notice that no matter which $t^\ast$ we start with, the choice of $T$ such that a fixed point of $\Phi_1$ exists in $\Theta_1$ does not change.
		This is because for $\Phi_1$ to be a contraction from $\Theta_1$ to itself, we need $T$ small enough so that $$\norm{\Phi_1(v)}_{\sup} \leq  (1+T+ 2^{p-1}\delta(K,f)^{p-1} )\delta(u_0,f) \leq 2 \delta(K,f)$$ and $$\norm{\Phi_1(u)(t) - \Phi_1(v)(t)} \leq  C(p) 2^{p} \delta(K,f)^{p-1} T  \norm{u-v}_{\Theta_1}$$ hold, and none of these bounds depend on $t^\ast$.
		In view of this, choose $t^\ast$ such that $T + t^\ast > T_{\max}$.
		Since $u(t^\ast) = v_0$, it can be easily verified that
		\begin{equation}
			\ol{u}(t) = \begin{cases} 
				u(t), & t \in [0,t^\ast],\\
				v(t- t^\ast), & t \in [t^\ast, t^\ast + T],
			\end{cases}
		\end{equation}
		defines a solution for (\ref{MainProblem}), contradicting the maximality of $T_{\max}$.

		(3) As $C_0(\R^n)  \seq L^\infty(\R^n)$, the result holds trivially for $q = \infty$.
		Let $q \in [1, \infty)$ and consider the complete metric space $\Theta_2$ which is  nothing but the intersection of  $$\{ v \in C([0,T_{\max}), C_0(\R^n))  : \norm{v}_{\sup}\leq 2 \delta(u_0,f) ,v(0) = u_{0} \} $$ and $$ \{ v \in C([0, T_{\max}), L^q(\R^n)) : \norm{ v }_{\sup} \leq 2 \delta_q(u_0,f)\},$$ where $\delta(u_0,f) = \max \{\norm{u_0}_{\sup}, \norm{f}_{\sup} \}$, $\delta_q(u_0,f) = \max \{\norm{u_0}_{L^q(\R^n)}, \norm{f}_{L^q(\R^n)} \}$ and norm is given by $$\norm{v }_{\Theta_2} = \norm{ v}_{C([0,T], C_0(\R^n))} + \norm{v}_{C([0, T_{\max}), L^q(\R^n))}.$$
		We prove that if $v \in C([0, T_{\max}), L^q(\R^n))$, then for every $t \in [0,T_{\max})$ we have $\Phi(v)(t) \in L^q(\R^n)$.
		Rest follows from the proof of (1).

		Recall that $$\Phi(v)(t) = e^{-t \Delta_X} v(0) + \int_{0}^t e^{-(t-s) \Delta_X} (|v(s)|^p + f) ds.$$
		Note that $e^{-t \Delta_X}$ is an operator from $L^q(\R^n)$ to $L^q(\R^n)$ since, by definition, $e^{-t \Delta_X}(v) = v \ast h_t$ is convolution of $v \in L^q(\R^n)$ with $h_t \in L^1(\R^n)$.
		As $u_0 \in L^q(\R^n)$ we obtain $e^{-t \Delta_X} v(0) \in  L^q(\R^n)$.
		Moreover, as $v(s) \in C_0(\R^n) \cap L^q(\R^n)$ we  have $|v(s)|^p \leq \norm{ v(s)^{p-1}}_{\sup}|v(s)|$.
		Using $f \in L^q(\R^n)$ gives $|v(s)|^p + f \in L^q(\R^n)$.
		Application of the operator $e^{-(t-s) \Delta_X}$ then gives an element of $L^q(\R^n)$.
		Being linear combination of elements of $L^q(\R^n)$ the function $\Phi(v)(t)$ belongs to $L^q(\R^n)$.
	\end{proof}

	\section{Global nonexistence}\label{Section3}
	\begin{defin}\label{DefinitionLocalWeakSolution}
		Let $f, u_0 \in L^1_{loc}(\R^n)$.
		Then $u \in L^p_{loc}((0,T), L^p_{loc}(\R^n))$ is called a local in time weak solution of (\ref{MainProblem}) if for every non-negative test function $\psi \in C^1_c( [0,T); C_c^2(\R^n) )$ we have \begin{eqnarray}
			\int_0^T \int_{\R^n} |u|^p \psi + \int_{\R^n} u_0 \psi(0, x) + \int_0^T \int_{\R^n} f \psi + \int_0^T \int_{\R^n} u \psi_t + \int_0^T \int_{\R^n} u \Delta_X \psi = 0.
		\end{eqnarray}
		When $T = \infty$, such $u$ is called global in time weak solution.
	\end{defin}

	Throughout this paper we fix $\frac{1}{p} + \frac{1}{p'} = 1$.
	We keep $C, C_1,$ and $C_2$ to denote the constants but their values may keep on changing from line to line without effecting the analysis.
	Using the self-adjointness of the operator $\Delta_X$, it can be proved using standard techniques that when $u_0 \in C_0(\R^n)$, every local in time mild solution belonging to the class $C([0,T], L^\infty(\R^n))$ is a local in time weak solution as well.

	In our first theorem, we provide a lower bound for the critical exponent as $\frac{n}{n-1}$.
	
	\begin{theorem}\label{subcritical}
		Let $p < \frac{n}{n-1}$ and $f, u_0 \in L^1_{loc}(\R^n)$ with  $u_0 \geq 0$.
		If $\int_{\R^n} f(x) dx > 0$ and the functions $a_{k,i}$ and $\frac{\partial a_{k,j}}{\partial x_i}$ are bounded  for every $1 \leq k \leq m$ and $1 \leq i,j \leq n$, then (\ref{MainProblem}) does not admit global in time weak solution.
	\end{theorem}
	\begin{proof}
		Let us suppose on the contrary that $u$ is a global in time weak solution of (\ref{MainProblem}).
		Then for every test function $\psi$, $$\int_0^T \int_{\R^n} |u|^p \psi + \int_{\R^n} u_0 \psi(0, x) + \int_0^T \int_{\R^n} f \psi + \int_0^T \int_{\R^n} u \psi_t + \int_0^T \int_{\R^n} u \Delta_X \psi = 0.$$
		As both $\psi$ and $u_0$ are non-negative, we obtain \begin{eqnarray}\label{Inequality}
			\int_{0}^T \int_{\R^n} f \psi + \int_{0}^T \int_{\R^n} |u|^p \psi &\leq& - \int_{0}^T \int_{\R^n} u \psi_t   - \int_{0}^T \int_{\R^n} u \Delta_X \psi \nonumber  \\
			& \leq &  \int_{0}^T \int_{\R^n} | u || \psi_t| + |u|| \Delta_X \psi|.
		\end{eqnarray}
		Set $$P_x:= \frac{x_1^2 + x_2^2 + \dots+ x_n^2}{T}.$$
		Let us define a test function with separated variables as $$\psi(t,x) = \Phi(P_x) \Phi(2t/T),$$ where \begin{equation*}
			\Phi(z) = \begin{cases} 
				1, & z \in [0,1],\\
				\searrow, & z \in [1, 2],\\
				0, & z \in (2, \infty),
			\end{cases}
		\end{equation*}
		is a function in $C^2_c(\R^+)$ satisfying the estimates $$ \int_{1 \leq \norm{x} \leq 2} \left|\frac{\left(\Phi'(\norm{x}) + \Phi''(\norm{x})\right)^p}{\Phi(\norm{x})}\right|^{\frac{1}{p-1}} < \infty \text{ and } \int_{0}^2   \left|\frac{\Phi'(t)^p}{\Phi(t)}  \right|^{\frac{1}{p-1}} dt < \infty.$$
		For example, we can take $\Phi(z) = \exp^{1- \frac{1}{1- (1,z)^4}}$ on $[1,2]$ as in \cite{Bandle}. 
		It is not difficult to verify that $\frac{(\Phi'(x) + \phi''(x))^p}{\Phi(x)}$ and $\frac{\Phi'(x)^p}{\Phi(x)}$ have no singularity on 1 or 2 and that these functions are continuous on $[1,2]$, and hence the required integrability follows.
		Note that $$\text{supp}(\psi) = \left\{ (t,x) \in \R^+ \times \R^n : 0 \leq 2t/T \leq 2, 0 \leq P_x \leq 2 \right\}.$$
		Now, $$\psi_t(x,t) = \frac{\partial}{\partial t}(\Phi(P_x) \Phi\left(\frac{2t}{T}\right) = \frac{2}{T}  \Phi(P_x) \Phi'\left(\frac{2t}{T}\right)$$ implies that $$\text{supp}(\psi_t) = \{ (t,x) \in \R^+ \times \R^n : 1 \leq 2t/T \leq 2, 0 \leq P_x \leq 2 \}$$ and  \begin{eqnarray*}
			\Delta_X \psi(t,x) &=& \Phi(2t/T) \Delta_X  \Phi(P_x)  \\
			&=& \Phi(2t/T)  \sum_{k=1}^m \left(  X_k^2 \Phi(P_x) + (\dive X_k) X_k \Phi(P_x) \right)\\
			&=& \Phi(2t/T) \sum_{k=1}^m \left( \Phi^{''}(P_x) (X_k(P_x))^2 + \Phi'(P_x) X_k^2(P_x) \right. \\
			& +& \left.  \left(\sum_{i=1}^n \frac{\partial a_{k,i}}{\partial x_i}\right) \left(\sum_{j=1}^n a_{k,j} \frac{\partial}{\partial x_j} \Phi(P_x) \right) \right)\\
			&=& \Phi(2t/T)  \sum_{k=1}^m \left( \Phi^{''}(P_x) \left( \sum_{j=1}^n a_{k,j} \frac{\partial P_x}{\partial x_j} \right)^2 + \left(\sum_{i=1}^n \frac{\partial a_{k,i}}{\partial x_i} \right) \left(\sum_{j=1}^n a_{k,j} \Phi'(P_x) \frac{\partial P_x}{\partial x_j}  \right)   \right. \\
			& + & \left. \Phi'(P_x) \left( \sum_{i,j = 1}^n a_{k,i} a_{k,j} \frac{\partial^2 P_x }{\partial x_i \partial x_j} + \sum_{j=1}^n \frac{\partial P_x}{\partial x_j} \left( \sum_{i=1}^n a_{k,i} \frac{\partial a_{k,j}}{\partial x_i} \right)  \right) \right)\\
			&=& \Phi(2t/T)  \sum_{k=1}^m \left( \Phi^{''}(P_x) \left( \sum_{j=1}^n a_{k,j} \frac{2 x_j}{T} \right)^2 + \left(\sum_{i=1}^n \frac{\partial a_{k,i}}{\partial x_i} \right) \left(\sum_{j=1}^n a_{k,j} \Phi'(P_x) \frac{2 x_j}{T}  \right) \right. \\
			& + & \left. \Phi'(P_x) \left( \sum_{j = 1}^n a_{k,j}^2 \frac{2 }{T}  + \sum_{j=1}^n \frac{2 x_j}{T}  \left( \sum_{i=1}^n a_{k,i} \frac{\partial a_{k,j}}{\partial x_i} \right)  \right)   \right)
		\end{eqnarray*}
		implies that 
		$$\text{supp}(\Delta_X \psi) = \{ (t,x) \in \R^+ \times \R^n : 0 \leq 2t/T \leq 2, 1 \leq P_x \leq 2 \}.$$
		As $\Phi$ us decreasing in $[1,2]$, $\psi$ is zero whenever $\psi_t$ or $\Delta_X \psi$ is zero.
		Hence we can write 	$|u||\psi_t|$ as $|u| |\psi|^{\frac{1}{p}} |\psi|^{\frac{-1}{p}} |\psi_t|$ and $|u||\Delta_X \psi|$ as $|u| |\psi|^{\frac{1}{p}} |\psi|^{\frac{-1}{p}} |\Delta_X \psi|$.
		Applying $\frac{p}{2}-$Young's inequality, we obtain $$\int_{0}^T \int_{\R^n}|u||\psi_t|  \leq \frac{1}{2} \int_{0}^T \int_{\R^n}|u|^p|\psi| + \frac{1}{p' (\frac{p}{2})^{p'-1}} \int_{0}^T \int_{\R^n}|\psi|^{\frac{-1}{p-1}}|\psi_t|^{\frac{p}{p-1}}$$
		and 
		$$\int_{0}^T \int_{\R^n}|u||\Delta_X \psi|  \leq \frac{1}{2} \int_{0}^T \int_{\R^n}|u|^p|\psi| + \frac{1}{p' (\frac{p}{2})^{p'-1}} \int_{0}^T \int_{\R^n}|\psi|^{\frac{-1}{p-1}}|\Delta_X \psi|^{\frac{p}{p-1}}.$$
		Using inequality 3.2 we now obtain
		\[\int_{0}^T \int_{\R^n} f \psi \leq \frac{1}{p' (\frac{p}{2})^{p'-1}} \Big( \int_{0}^T \int_{\R^n} |\psi|^{\frac{-1}{p-1}}|\Delta_X \psi|^{\frac{p}{p-1}} + |\psi|^{\frac{-1}{p-1}}|\psi_t|^{\frac{p}{p-1}} \Big).\]

		In the support of $\Delta_X \psi$, we have $|x_i| \leq \sqrt{2T}$ for every $1 \leq i \leq n$.
		Using the assumptions on $a_{k,i}$ and the fact that $\Phi$ is twice continuously differentiable, one can choose \[M = \max  \left\{  \norm{a_{k,i}(x)}_{\sup}, \norm{a_{k,i}^2(x)}_{\sup}, \left\| \frac{\partial a_{k,j}(x)}{\partial x_i}\right\|_{\sup} \right\}_{1 \leq k \leq m; 1 \leq i,j \leq n}.\]
		Thus, for large $T$ there exists a constant $K>0$ such that for every $ (t,x) \in \text{supp}(\Delta_X \psi )$ we have \begin{eqnarray*}
			|\Delta_X \psi (t,x)| &\leq & |\Phi(2t/T)|  \sum_{k=1}^m \left( |\Phi^{''}(P_x)| \left( \sum_{j=1}^n |a_{k,j}| \left|\frac{2 x_j}{T}\right| \right)^2 \right. \\
			& +& \left. |\Phi'(P_x)| \left( \sum_{j = 1}^n a_{k,j}^2 \frac{2 }{T}  + \sum_{j=1}^n \frac{2 |x_j|}{T}  \left( \sum_{i=1}^n |a_{k,i}| \left|\frac{\partial a_{k,j}}{\partial x_i} \right| \right)  \right) \right. \\
			& + &  \left. \left(\sum_{i=1}^n \left|\frac{\partial a_{k,i}}{\partial x_i} \right|\right) \left(\sum_{j=1}^n |a_{k,j}| |\Phi'(P_x)| \left|\frac{2 x_j}{T} \right| \right) \right) \\
			& \leq& \frac{K |\Phi(2t/T) \left(\Phi'(P_x) + \Phi''(P_x)\right)|}{\sqrt{T}}.
		\end{eqnarray*}
		With change of variables $t \leftrightarrow \frac{2t}{T}$ and  $x_i \leftrightarrow \frac{x_i}{\sqrt{T}}$ for every $1 \leq i \leq n$, we have  \begin{eqnarray*}
			\int_{0}^T \int\limits_{\R^n} |\psi|^{ \frac{-1}{p-1}}|\Delta_X \psi|^{\frac{p}{p-1}}
			&=&  \int\limits_{\text{supp}(\Delta_X \psi)} |\psi|^{ \frac{-1}{p-1}}|\Delta_X \psi|^{\frac{p}{p-1}}    \\
			&\leq & \int_{0}^T \int\limits_{1 \leq P_x \leq 2} \left|\Phi(P_x) \Phi\left(\frac{2t}{T}\right)\right|^{\frac{-1}{p-1}} \left(\frac{K\left|\Phi\left(\frac{t}{T}\right) \left((\Phi'+ \Phi'')(P_x)\right)\right|}{\sqrt{T}}\right)^{\frac{p}{p-1}} \\
			& =& \left(\frac{K}{\sqrt{T}}\right)^{\frac{p}{p-1}} \int_{0}^T \int\limits_{1 \leq P_x \leq 2} |\Phi(P_x) |^{\frac{-1}{p-1}} 
			|(\Phi'+ \Phi'')(P_x) |^{\frac{p}{p-1}} \left|\Phi\left(\frac{2t}{T}\right)\right| \\
			&= & \left(\frac{K}{\sqrt{T}}\right)^{\frac{p}{p-1}} T^{\frac{n}{2}} \frac{T}{2} \int_{0}^2 \int\limits_{1 \leq \norm{x} \leq 2} \left|\frac{\left((\Phi'+ \Phi'')(\norm{x})\right)^p}{\Phi(\norm{x})}\right|^{\frac{1}{p-1}} |\Phi(t)|\\ 
			&\leq&  C T^{\frac{n}{2}+1 - \frac{p}{2(p-1)}}.
		\end{eqnarray*} 
		Similarly,
		\begin{eqnarray*}
			\int_{0}^T \int\limits_{\R^n}  |\psi|^{\frac{-1}{p-1}}|\psi_t|^{\frac{p}{p-1}}
			&=&  \int_{\text{supp}(\psi_t)}  |\Phi(P_x) \Phi(2t/T)|^{\frac{-1}{p-1}} \left|\frac{2}{T}  \Phi(P_x) \Phi'(2t/T)\right|^{\frac{p}{p-1}} \\
			& = &  \left(\frac{2}{T}\right)^{{\frac{p}{p-1}}}  \int_{\text{supp}(\psi_t)}  |\Phi(P_x)|  \left| \frac{\Phi'(2t/T)^p}{\Phi(2t/T)} \right|^{\frac{1}{p-1}}\\
			& = & \left(\frac{2}{T}\right)^{{\frac{p}{p-1}}} \frac{T}{2} T^{\frac{n}{2}} \int_{1}^2 \int_{0 \leq \norm{x} \leq 2}  |\Phi(\norm{x})|   \left| \frac{\Phi'(t)^p}{\Phi(t)} \right|^{\frac{1}{p-1}} \leq C T^{\frac{n}{2}+ 1 - \frac{p}{p-1}}.
		\end{eqnarray*}
		
		As, \begin{equation*}
			\int_{0}^T \int_{\R^n} f \psi = \int_{0}^T \Phi\left( \frac{2t}{T}\right) \int_{\R^n} f \Phi(P_x)  =  \frac{T}{2} \int_{0}^2 \Phi(t') \int_{\R^n} f \Phi(P_x) \geq T \int_{\R^n} f \Phi(P_x)
		\end{equation*}
		we obtain $$T \int_{\R^n} f \Phi(P_x) \leq \int_{0}^T \int_{\R^n} f \psi \leq C T^{\frac{n}{2}+1 - \frac{p}{2(p-1)}}.$$
		By dominated convergence theorem, $ \int_{\R^n} f \Phi \to \int_{\R^n} f $ as $T \to \infty$.
		Taking $T \to \infty$ and using the fact that $\frac{n}{2} < \frac{p}{2(p-1)}$ gives $\int_{\R^n} f \leq 0$.
		This contradicts the hypothesis that $\int_{\R^n} f(x) dx > 0$.
		So, no such $u$ exists and the statement is proved.
	\end{proof}

	The following theorem provides an estimate for the upper bound on the blow-up time of local in time weak solution to (\ref{MainProblem}).

	\begin{cor}\label{BlowUpTime}
		Let $p < \frac{n}{n-1}$.
		Assume that for $0 \leq f \in L^1_{loc}(\R^n)$ there exists an $\epsilon >0$ such that  $f(x) \geq \epsilon \norm{x}^{- \lambda}$ whenever $\norm{x} \geq 1$ and $ \lambda \in (0, \frac{p}{p-1})$.
		Let $T_\epsilon$ denote a time such that a weak solution exists in $[0, T_\epsilon)$.
		For every $1 \leq k \leq m$ and $1 \leq i, j \leq n$, if  the functions $a_{k,i}$ and $\frac{\partial a_{k,j}}{\partial x_i}$ are bounded then there exists a positive constant $C$ such that $T_\epsilon \leq C \epsilon^{1/(\lambda - \frac{p}{2(p-1)})}$.
	\end{cor}
	\begin{proof}
		For the test function as used in the proof of \Cref{subcritical}, we observed that $$\int_{\R^n} f \Phi(P_x) \leq C T_\epsilon^{\frac{n}{2} - \frac{p}{2(p-1)}}.$$
		For an arbitrary $T_0 < T_\epsilon$, changing the variable $x \to x':= \frac{x}{\sqrt{T_\epsilon}}$ implies that $\norm{x} \geq 1$ if and only if $\norm{x'} \geq \frac{1}{\sqrt{T_\epsilon}}$.
		Thus,
		\begin{eqnarray*}
			C T_\epsilon^{\frac{n}{2} - \frac{p}{2(p-1)}} & \geq &\int_{\R^n} f(x) \Phi(P_x) \geq  \int_{\norm{x} \geq 1} f(x) \Phi(P_x) dx  \geq  \epsilon \int_{\norm{x} \geq 1} \norm{x}^{-\lambda} \Phi(P_x) dx\\
			& =& \epsilon T_\epsilon^{-\frac{\lambda}{2}} \int_{\norm{x} \geq 1}   \left\|\frac{x}{\sqrt{T_\epsilon}}\right\|^{-\lambda}     \Phi(P_x) dx =  \epsilon T_\epsilon^{\frac{n - \lambda}{2}} \int_{\norm{x'}  \geq  \frac{1}{\sqrt{T_\epsilon}}}  \norm{x'}^{-\lambda} \Phi(P_{x'}) dx' \\
			& \geq & \epsilon T_\epsilon^{\frac{n - \lambda}{2}} \int_{\norm{x'} \geq \frac{1}{\sqrt{T_0}}}  \norm{x'}^{-\lambda} \Phi(\| x'\|^2) dx' =  \epsilon C_2 T_\epsilon^{\frac{n - \lambda}{2}}.
		\end{eqnarray*}

		As $\frac{p}{p-1} > \lambda$, we obtain $$\epsilon \leq \frac{1}{C_1} T_\epsilon^{\frac{n}{2} - \frac{p}{2(p-1)} - \frac{n - \lambda}{2} }  = \frac{1}{C_1} T_\epsilon^{- \frac{p}{2(p-1)} + \frac{\lambda}{2}} = \frac{1}{C_1} \frac{1}{T_\epsilon^{\frac{p}{2(p-1)} - \frac{\lambda}{2}}}$$ and hence $T_\epsilon^{\frac{p}{2(p-1)} - \frac{\lambda}{2}} \leq \frac{1}{C_1 \epsilon}$ which implies $T_\epsilon \leq ({\frac{1}{C_1 \epsilon}})^{\frac{1}{\frac{p}{2(p-1)} - \frac{\lambda}{2}}} = C \epsilon^{2/(\lambda - \frac{p}{p-1})}$.
	\end{proof}

	\begin{theorem}\label{critical}
		Let $p = \frac{n}{n-1}$ and $f, u_0 \in L^1_{loc}(\R^n)$ with  $u_0 \geq 0$.
		If $\int_{\R^n} f(x) dx > 0$ and the functions $a_{k,i}$ and $\frac{\partial a_{k,j}}{\partial x_i}$ are bounded  for every $1 \leq k \leq m$ and $1 \leq i,j \leq n$, then (\ref{MainProblem}) does not admit global in time weak solution.
	\end{theorem}
	\begin{proof}
		Let us suppose on the contrary that $u$ is a global in time weak solution for (\ref{MainProblem}).
		For any $0 < R < \infty$, set $$P_x :=  \frac{ \ln\left(\frac{\norm{x}}{\sqrt{R}}\right) }{\ln(\sqrt{R})}$$ and define a test function with separated variables as $$\psi(t,x) = \Phi(P_x) \Phi(t/T),$$ where \begin{equation*}
			\Phi(z) = \begin{cases} 
				1, & z \in (- \infty ,0],\\
				\searrow, & z \in [0, 1],\\
				0, & z \in (1, \infty),
			\end{cases}
		\end{equation*}
		is a function in $C^2(\R)$  satisfying the estimates $$\int\limits_{0 \leq r \leq 1} \left| \frac{ \left(\Phi' + \Phi''\right)^p\left(r \right)}{\Phi\left( r \right)}\right|^{\frac{1}{p-1}} r^{n-2} dr < \infty \text{ and } \int_{0}^1   \left|\frac{\Phi'(t)^p}{\Phi(t)}  \right|^{\frac{1}{p-1}} dt < \infty.$$
		For example, one can take a suitable translation of the function given in the proof of \Cref{subcritical}.
		Note that $$\text{supp}(\psi) \seq \{ (t,x) \in \R^+ \times \R^n : 0 \leq t \leq T, - \infty < P_x \leq 1 \}.$$
		As $- \infty < P_x \leq 1 \iff 0< \norm{x} \leq R$, the set $\text{supp}(\psi)$ is compact.
		The fact that $$\psi_t(x,t) = \frac{\partial}{\partial t}(\Phi(P_x) \Phi(t/T)) = \frac{1}{T}  \Phi(P_x) \Phi'(t/T)$$ implies  $$\text{supp}(\psi_t) \seq \{ (t,x) \in \R^+ \times \R^n : 0 \leq t \leq T, - \infty < P_x \leq 1 \}$$ and for $(t,x) \in \R^+ \times \R^n$ 
		\begin{eqnarray*}
			\Delta_X \psi(t,x) &=& \Phi(t/T) \Delta_X  \Phi(P_x) 
			= \Phi(t/T)  \sum_{k=1}^m \left( \Phi^{''}(P_x) \left( \sum_{j=1}^n a_{k,j} \frac{\partial P_x}{\partial x_j} \right)^2 \right. \\
			& +& \left. \Phi'(P_x) \left( \sum_{i,j = 1}^n a_{k,i} a_{k,j} \frac{\partial^2 P_x }{\partial x_i \partial x_j} + \sum_{j=1}^n \frac{\partial P_x}{\partial x_j} \left( \sum_{i=1}^n a_{k,i} \frac{\partial a_{k,j}}{\partial x_i} \right)  \right) \right. \\
			& + & \left. \left(\sum_{i=1}^n \frac{\partial a_{k,i}}{\partial x_i} \right) \left(\sum_{j=1}^n a_{k,j} \Phi'(P_x) \frac{\partial P_x}{\partial x_j}  \right) \right)\\
			&=& \Phi(t/T) \sum_{k=1}^m \left( \Phi^{''}(P_x) \left( \sum_{j=1}^n a_{k,j}    \frac{x_j}{\ln{(\sqrt{R})} \norm{x}^2 } \right)^2 \right. \\ 
			&+& \left. \Phi'(P_x) \left( \sum_{i, j = 1}^n a_{k,j}a_{k,i} \left(  \frac{1}{\ln{(\sqrt{R})}} \left(  \frac{\delta_{ij}}{\norm{x}^2} - \frac{2 x_ix_j}{\norm{x}^4} \right)    \right)  \right. \right. \\
			&+& \left. \left. \sum_{j=1}^n \frac{x_j}{\ln{(\sqrt{R})} \norm{x}^2  }  \left( \sum_{i=1}^n a_{k,i} \frac{\partial a_{k,j}}{\partial x_i} \right)  \right) \right. \\
			& + & \left. \left(\sum_{i=1}^n \frac{\partial a_{k,i}}{\partial x_i} \right) \left(\sum_{j=1}^n a_{k,j} \Phi'(P_x) \frac{x_j}{\ln{(\sqrt{R})} \norm{x}^2  } \right) \right)
		\end{eqnarray*}
		implies that $$\text{supp}(\Delta_X(\psi)) \seq \{ (t,x) \in \R^+ \times \R^n : 0 \leq P_x \leq 1, 0 \leq t \leq T \}.$$
		For large $R$  we have $\ln(\sqrt{R}) \geq 0$.
		Thus,  $0 \leq \frac{ \ln\left(\frac{\norm{x}}{\sqrt{R}}\right) }{\ln(\sqrt{R})} \leq 1 \iff   0 \leq  \ln\left(\frac{\norm{x}}{\sqrt{R}}\right) \leq \ln(\sqrt{R}) \iff \exp{(0)} \leq  \exp{\left(\ln\left(\frac{\norm{x}}{\sqrt{R}}\right)\right)} \leq \exp{(\ln(\sqrt{R}))}        \iff 1 \leq \frac{\norm{x}}{\sqrt{R}} \leq \sqrt{R} \iff \sqrt{R} \leq \norm{x} \leq R$.
		Hence, $$\text{supp}(\Delta_X(\psi)) \seq \{ ( t,x) \in \R^+ \times \R^n : \sqrt{R} \leq \norm{x} \leq R, 0 \leq t \leq T \}.$$

		As in \Cref{subcritical}, $$\int_{0}^T \int_{\R^n} f \psi \leq \frac{1}{p' (\frac{p}{2})^{p'-1}} \Big( \int_{0}^T \int_{\R^n} |\psi|^{\frac{-1}{p-1}}|\Delta_X \psi|^{\frac{p}{p-1}} + |\psi|^{\frac{-1}{p-1}}|\psi_t|^{\frac{p}{p-1}} \Big).$$

		Using the assumptions on $a_{k,i}$ and the fact that $\Phi$ is twice continuously differentiable, one can choose \[M = \max  \left\{  \norm{a_{k,i}(x)}_{\sup}, \norm{a_{k,i}^2(x)}_{\sup}, \left\| \frac{\partial a_{k,j}(x)}{\partial x_i}\right\|_{\sup} \right\}_{1 \leq k \leq m; 1 \leq i,j \leq n}.\]
		Note that $|x_i| \leq \norm{x}$ and $\sqrt{R} \leq |x_i| \leq R$, in the support of $\Delta_X \psi$, for every $1 \leq i \leq n$.
		Thus, there exists a constant $C>0$ such that for every $ (t,x) \in \text{supp}(\Delta_X \psi)$ we have
		\begin{eqnarray*}
			|\Delta_X \psi (t,x)|
			&\leq & C |\Phi(t/T)| \sum_{k=1}^m \left( |\Phi^{''}(P_x)| \left(  \frac{\norm{x}}{\ln{(\sqrt{R})} \norm{x}^2 } \right)^2 \right. \\ 
			&+& \left. |\Phi'(P_x)| \left( \sum_{i, j = 1}^n  \frac{|a_{k,j}a_{k,i}|}{\ln{(\sqrt{R})}} \left(   \left(  \frac{\delta_{ij}}{\norm{x}^2} \right) + \left( \frac{2 \norm{x}^2}{\norm{x}^4} \right)    \right)  \right. \right. \\
			&+& \left. \left. \sum_{j=1}^n \frac{\norm{x}}{\ln{(\sqrt{R})} \norm{x}^2  }  \left( \sum_{i=1}^n \left|a_{k,i} \frac{\partial a_{k,j}}{\partial x_i} \right| \right)  \right) \right. \\
			& + & \left. \left(\sum_{i=1}^n \left| \frac{\partial a_{k,i}}{\partial x_i} \right| \right) \left(\sum_{j=1}^n |a_{k,j}| |\Phi'(P_x)| \frac{\norm{x}}{\ln{(\sqrt{R})} \norm{x}^2  } \right) \right)\\
			&\leq & \frac{C |\Phi(t/T) \left(\Phi'(P_x) + \Phi''(P_x)\right)|}{\ln{\sqrt{R}} \norm{x}} \leq  \frac{C |\Phi(t/T) \left(\Phi'(P_x) + \Phi''(P_x)\right) |}{\ln{\sqrt{R}} \sqrt{R}}. 
		\end{eqnarray*}
		Here we have used the fact that $1$-norm is equivalent to $2$-norm on $\R^n$.

		Now, with change of the variables $x \leftrightarrow \frac{x}{\sqrt{R}}$ and $t \leftrightarrow \frac{t}{T}$ we obtain  \begin{eqnarray*}
			\int_{0}^T \int\limits_{\R^n} |\psi|^{\frac{-1}{p-1}}|\Delta_X \psi|^{\frac{p}{p-1}} &=& \int\limits_{\text{supp}(\Delta_X(\psi))} |\psi|^{\frac{-1}{p-1}}|\Delta_X \psi|^{\frac{p}{p-1}}\\
			& \leq  & \int\limits_{\text{supp}(\Delta_X(\psi))} \left|\Phi(P_x) \Phi\left(\frac{t}{T}\right)\right|^{\frac{-1}{p-1}} \left(\frac{C \left|\Phi\left(\frac{t}{T}\right) \left(\Phi'(P_x) + \Phi''(P_x)\right)\right|}{\ln{\sqrt{R}} \sqrt{R}}\right)^{\frac{p}{p-1}} \\
			& \leq & \left(\frac{C }{\ln{\sqrt{R}} \sqrt{R}} \right)^{\frac{p}{p-1}} \int\limits_{\text{supp}(\Delta_X(\psi))} \frac{|\Phi(t/T)|}{|\Phi(P_x)|^{\frac{1}{p-1}}}  \left|(\Phi' + \Phi'')(P_x)\right|^{\frac{p}{p-1}} \\
			&= & C \left(\frac{1 }{\ln{\sqrt{R}} } \right)^{\frac{p}{p-1}} T \int_{0}^1 |\Phi(t)| \int\limits_{1 \leq \norm{x} \leq \sqrt{R}} \left| \frac{ \left(\Phi' + \Phi''\right)^p\left(\frac{ \ln\left(\norm{x}\right) }{\ln(\sqrt{R})}\right)}{\Phi\left(\frac{ \ln\left(\norm{x}\right) }{\ln(\sqrt{R})}\right)}  \right|^{\frac{1}{p-1}}  \\
			&\leq & C T \ln{(\sqrt{R})^{- n-1}}.
		\end{eqnarray*}
		Where, we have used the fact that $\frac{p}{p-1} = n$.
		Similarly,
		\begin{eqnarray*}
			\int_{0}^T \int\limits_{\R^n}  |\psi|^{\frac{-1}{p-1}}|\psi_t|^{\frac{p}{p-1}} &\leq&  \int_{\text{supp}(\psi_t)}  |\Phi(P_x) \Phi(t/T)|^{\frac{-1}{p-1}} \left|\frac{1}{T}  \Phi(P_x) \Phi'(t/T) \right|^{\frac{p}{p-1}} \\
			& \leq & \left(\frac{1}{T}\right)^{\frac{p}{p-1}} T R^{\frac{n}{2}} \int_{0}^1 \int\limits_{1 \leq \norm{x} \leq \sqrt{R}}  \left|\Phi\left(\frac{ \ln\left(\norm{x}\right) }{\ln(\sqrt{R})}\right)\right|  \left|\frac{\Phi'(t)^p}{\Phi(t)}  \right|^{\frac{1}{p-1}}\\
			& = & C T^{-n + 1} R^{\frac{n}{2}} \int\limits_{1 \leq \norm{x} \leq \sqrt{R}}  \left|\Phi\left(\frac{ \ln\left(\norm{x}\right) }{\ln(\sqrt{R})}\right)\right| dx \\
			& \leq & C T^{-n + 1} R^{\frac{n}{2}} \frac{1}{\ln(\sqrt{R})} \int\limits_{0 \leq r \leq 1}  \left|\Phi\left(r\right)\right| r^{n-2} dr \\
			& \leq & C T^{-n + 1} \frac{R^{\frac{n}{2}}}{\ln(\sqrt{R})}.
		\end{eqnarray*}
		Moreover, \begin{eqnarray*}
			\int_{0}^T \int_{\R^n} f \psi &=& \int_{0}^T \Phi(t/T) \int_{\R^n} f \Phi(P_x)  =  T \int_{0}^1 \Phi(t') \int_{\R^n} f \Phi(P_x) \geq  C_1 T \int_{\R^n} f \Phi(P_x)
		\end{eqnarray*}
		implies $$\int_{\R^n} f \Phi(P_x) \leq C \left(  \ln{(\sqrt{R})^{- n-1}} +  T^{-n} \frac{R^{\frac{n}{2}}}{\ln(\sqrt{R})} \right).$$
		Substituting $T = R$ and then taking $R \to \infty$ we obtain $\int_{\R^n} f(x) dx \leq 0$ which  contradicts the hypothesis that $\int_{\R^n} f(x) dx > 0$.
		So, no such $u$ exists and the statement is proved.
	\end{proof}

	This demonstrates that $\frac{n}{n-1}$ serves as a lower bound for the possible critical exponent.
	As indicated in \Cref{SubcriticalWithZeroDivergence}, if we apply the same approach as in the classical case, i.e. assuming that all the functions $a_{k,i}$ are constant, then we obtain  $\frac{n}{n-2}$ as a lower bound for the critical exponent, a well known result (see \cite[Theorem 2.1 (a)]{Bandle}).
	Therefore, we anticipate $\frac{n}{n-1}$ to be the critical exponent for certain classes $X$ of vector fields -see \Cref{subcritical54}.
	
	\begin{theorem}\label{SubcriticalWithZeroDivergence}
		Assume that for every $1 \leq k \leq m$ and $1 \leq i \leq n$ the function $a_{k,i}$ is constant and let  $f, u_0 \in L^1_{loc}(\R^n)$ with  $u_0 \geq 0$.
		If $p < \frac{n}{n-2}$ and $\int_{\R^n} f(x) dx > 0$, then (\ref{MainProblem}) does not admit global in time weak solution.
	\end{theorem}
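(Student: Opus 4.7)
The plan is to mirror the test-function and Young-inequality strategy of \Cref{subcritical} and \Cref{critical}, exploiting the simplification that constancy of the $a_{k,i}$ kills both $\dive X_k$ and every derivative $\frac{\partial a_{k,j}}{\partial x_i}$ appearing in $\Delta_X\psi$. This removes the structural coupling between spatial and temporal scales that was forced on us in \Cref{subcritical}, and lets us use a genuinely two-parameter cut-off with an independent spatial scale $R$ and temporal scale $T$. Arguing by contradiction, suppose $u$ is a global in time weak solution and choose the separated test function $\psi(t,x) = \Phi(\|x\|^2/R^2)\,\Phi(t/T)$, with $\Phi$ the same cut-off used in \Cref{subcritical}. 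Non-negativity of $u_0$ and $\psi$, together with the weak formulation and the same $\tfrac{p}{2}$-Young step as in \Cref{subcritical}, give
\begin{equation*}
\int_0^{2T}\!\!\int_{\R^n} f\psi \;\leq\; C\int_0^{2T}\!\!\int_{\R^n}\Bigl(|\psi|^{-1/(p-1)}|\psi_t|^{p/(p-1)} + |\psi|^{-1/(p-1)}|\Delta_X\psi|^{p/(p-1)}\Bigr).
\end{equation*}

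Setting $P_x = \|x\|^2/R^2$, constancy of $a_{k,i}$ reduces $\Delta_X\psi$ to
\begin{equation*}
\Phi(t/T)\sum_{k=1}^m\Bigl[\Phi''(P_x)\Bigl(\sum_{j=1}^n \tfrac{2 a_{k,j} x_j}{R^2}\Bigr)^{\!2} + \Phi'(P_x)\sum_{j=1}^n \tfrac{2 a_{k,j}^2}{R^2}\Bigr],
\end{equation*}
since every term carrying a derivative of some $a_{k,j}$ or a factor of $\dive X_k$ vanishes identically. On $\text{supp}(\Delta_X\psi) \subseteq \{R\le\|x\|\le\sqrt{2}\,R\}\times[0,2T]$ this gives the bound $|\Delta_X\psi|\le C R^{-2}|\Phi(t/T)(\Phi'+\Phi'')(P_x)|$.

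Next, performing the change of variables $y = x/R$, $\tau = t/T$ should yield
\begin{equation*}
\int_0^{2T}\!\!\int_{\R^n}|\psi|^{-1/(p-1)}|\Delta_X\psi|^{p/(p-1)} \;\leq\; C\,T\,R^{\,n-2p/(p-1)},
\end{equation*}
\begin{equation*}
\int_0^{2T}\!\!\int_{\R^n}|\psi|^{-1/(p-1)}|\psi_t|^{p/(p-1)} \;\leq\; C\,T^{-1/(p-1)}\,R^{\,n},
\end{equation*}
where finiteness of the rescaled integrals uses the integrability estimates on $\Phi$ inherited from \Cref{subcritical}. On the other side, $\int_0^{2T}\!\int_{\R^n} f\psi = C_1\,T\int_{\R^n} f(x)\Phi(\|x\|^2/R^2)\,dx$, which by dominated convergence converges to $C_1\,T\int_{\R^n} f > 0$ as $R\to\infty$.

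To close the argument, I divide the master inequality by $T$ and take $T = R^2$, which collapses both error terms to $R^{\,n-2p/(p-1)}$; since the hypothesis $p<\tfrac{n}{n-2}$ is equivalent to $n-2p/(p-1)<0$, sending $R\to\infty$ forces $\int_{\R^n} f \leq 0$, contradicting the assumption that $\int_{\R^n} f(x)\,dx > 0$. The main obstacle is purely computational bookkeeping, namely tracking the constants across the rescaling and verifying that the choice $T=R^2$ annihilates both error terms simultaneously; conceptually the novelty over \Cref{subcritical} is precisely the decoupling of $R$ from $T$, which is what upgrades the lower bound on the critical exponent from $\tfrac{n}{n-1}$ to $\tfrac{n}{n-2}$.
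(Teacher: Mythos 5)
Your proof is correct and is essentially the paper's own argument: the paper works directly with the single-parameter parabolic test function $\psi(t,x)=\Phi(\|x\|^{2}/T)\,\Phi(2t/T)$, which is exactly your two-parameter construction after the substitution $T=R^{2}$ that you perform at the end anyway, and the resulting estimates $CT^{\frac{n}{2}+1-\frac{p}{p-1}}$ match yours term for term. One interpretive quibble: the upgrade from $\frac{n}{n-1}$ to $\frac{n}{n-2}$ is not produced by decoupling $R$ from $T$ (you re-couple them immediately), but by the fact that with constant $a_{k,i}$ every surviving term of $\Delta_X\psi$ decays like $R^{-2}$ rather than $R^{-1}$ on its support, which is precisely the simplification the paper exploits under the same parabolic scaling.
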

	\begin{proof}
		The proof is similar to that of \Cref{subcritical} apart from some necessary modifications in the computations which we describe below.
		Set $P_x$, $\psi$ and $\Phi$ as in \Cref{subcritical}.

		Then
		\begin{eqnarray*}
			\Delta_X \psi(t,x) &=& \Phi(2t/T) \Delta_X  \Phi(P_x)  \\
			&=& \Phi(2t/T)  \sum_{k=1}^m \left( \Phi^{''}(P_x) \left( \sum_{j=1}^n a_{k,j} \frac{2 x_j}{T} \right)^2 + \Phi'(P_x) \left( \sum_{j = 1}^n a_{k,j}^2 \frac{2 }{T} \right) \right)
		\end{eqnarray*} gives $$\text{supp}(\Delta_X(\psi)) \seq \{ (t,x) : 1 \leq P_x \leq 2, 0 \leq 2t/T \leq 2 \}.$$

		To prove the theorem by contradiction, 	let us suppose that $u$ is a global in time weak solution.
		As in \Cref{subcritical}, we obtain  $$\int_{0}^T \int_{\R^n} f \psi \leq \frac{1}{p' (\frac{p}{2})^{p'-1}} \Big( \int_{0}^T \int_{\R^n} |\psi|^{\frac{-1}{p-1}}|\Delta_X \psi|^{\frac{p}{p-1}} + |\psi|^{\frac{-1}{p-1}}|\psi_t|^{\frac{p}{p-1}} \Big).$$

		On the support of $\Delta_X \psi$, we have $|x_i| \leq \sqrt{T}$ for every $1 \leq i \leq n$.
		Using the assumptions on $a_{k,i}$ and the fact that $\Phi$ is twice continuously differentiable, choose $$M = \sup_{(t,x) \in (0,\infty) \times \R^n} \left\{   |a_{k,i}(x)|, |a_{k,i}(x)|^2 \right\}_{1 \leq k \leq m; 1 \leq i,j \leq n}.$$
		Thus, for large $T$ there exists a constant $K>0$ such that \begin{eqnarray*}
			|\Delta_X \psi| &\leq & |\Phi(2t/T)|  \sum_{k=1}^m \left( |\Phi^{''}(P_x)| \left( \sum_{j=1}^n |a_{k,j}| \left|\frac{2 x_j}{T}\right| \right)^2 + |\Phi'(P_x)| \left( \sum_{j = 1}^n a_{k,j}^2 \frac{2 }{T} \right) \right)\\
			&\leq& \frac{K |\Phi(2t/T) \left(\Phi'(P_x) + \Phi''(P_x)\right)|}{T}.
		\end{eqnarray*}

		Now, \begin{eqnarray*}
			\int_{0}^T \int\limits_{\R^n} |\psi|^{\frac{-1}{p-1}}|\Delta_X \psi|^{\frac{p}{p-1}}
			& \leq  & \int\limits_{\text{supp}(\Delta_X(\psi))} \left|\Phi(P_x) \Phi\left(\frac{2t}{T}\right)\right|^{\frac{-1}{p-1}} \left(\frac{K \left|\Phi\left(\frac{2t}{T}\right)(\Phi'+ \Phi'')(P_x) \right|}{T}\right)^{\frac{p}{p-1}} \\
			&\leq & \left(\frac{K }{T}\right)^{\frac{p}{p-1}} T^{\frac{n}{2}} \frac{T}{2} \int_{0}^2 \int_{1 \leq \norm{x} \leq 2} \left| \frac{  ( \Phi' + \Phi'' )^p (\norm{x}) }{\Phi(\norm{x})}   \right|^{\frac{1}{p-1}} \Phi(t)\\
			& \leq & C T^{\frac{n}{2}+1 - \frac{p}{p-1}}.
		\end{eqnarray*} 
		
		Similarly,
		\begin{eqnarray*}
			\int_{0}^T \int\limits_{\R^n}   | \psi|^{\frac{-1}{p-1}} |\psi_t|^{\frac{p}{p-1}} 
			&\leq& \int_{\text{supp}(\psi_t)}  |\Phi(P_x) \Phi(2t/T)|^{\frac{-1}{p-1}} \left|\frac{2}{T}  \Phi(P_x) \Phi'(2t/T)\right|^{\frac{p}{p-1}} \\
			& \leq &  \left(\frac{2}{T}\right)^{{\frac{p}{p-1}}} \int_{\text{supp}(\psi_t)}  |\Phi(P_x)|  |\Phi(2t/T)|^{\frac{-1}{p-1}}|  \Phi'(2t/T)|^{\frac{p}{p-1}}\\
			& \leq & \left(\frac{2}{T}\right)^{{\frac{p}{p-1}}} \frac{T}{2} T^{\frac{n}{2}} \int_{0}^2 \int_{0 \leq \norm{x} \leq 2}  |\Phi(\norm{x})|  |\Phi(t)|^{\frac{-1}{p-1}}|  \Phi'(t)|^{\frac{p}{p-1}}.
		\end{eqnarray*}
		
		Moreover, \begin{eqnarray*}
			\int_{0}^T \int_{\R^n} f \psi &=& \int_{0}^T \Phi(2t/T) \int_{\R^n} f \Phi(P_x)    \geq T \int_{\R^n} f \Phi(P_x).
		\end{eqnarray*}
		
		Consequently, we obtain $$T \int_{\R^n} f \Phi(P_x) \leq \int_{0}^T \int_{\R^n} f \psi \leq  C ( T^{\frac{n}{2}+1 - \frac{p}{p-1}} + T^{\frac{n}{2}+1 - \frac{p}{p-1}}).$$
		Using the fact that $\frac{n}{2} < \frac{p}{p-1}$, taking $T \to \infty$ and applying the Lebesgue dominated convergence theorem we obtain $\int_{\R^n} f \Phi \to \int_{\R^n} f \leq 0$.
		This contradicts the hypothesis that $\int_{\R^n} f > 0$.
		So, no such $u$ exists and the statement is proved.
	\end{proof}

	\begin{remark}\label{remark}
		With a minor modification in the test functions used to obtain results of this section, the non-negativity assumption on $u_0$ may be removed.
		The only use of non-negativity of $u_0$ in the proofs was to get rid of the term $\int_{\R^n} u_0 \psi(0, x)$ in equation 3.1 so that we have inequality 3.2.
		However, this can be achieved by considering a test function $\psi$ which vanishes at time $t = 0$.
		In particular, in \Cref{subcritical}, instead of the test function $\psi(t,x) = \Phi(P_x) \Phi(2t/T)$, one may use $\psi(t,x) = \Phi(P_x) \Phi_1(t/T)$ where $\Phi_1 \in C^2_0(\R^n)$ is an appropriate function supported in $(0,1)$.
		It will be demonstrated through the following section that similar steps then lead to the desired conclusion.
	\end{remark}

	\section{On certain well-studied vector fields}\label{Section4}
	
	We now delve into the study of more general vector fields and undertake the delicate task of constructing test functions to determine conditions under which (\ref{MainProblem}) does not admit a global in time weak solution.
	As mentioned in \cite{Biagi}, what is special about some of these vector fields is that although they fall within the class of vector fields being considered in this paper (with $\dive X_k = 0$), they do not give rise to a stratified Lie group structure on the space (in another article, we find the critical Fujita exponent on stratified Lie groups).
	Therefore, our findings generalize the subcritical case examined in \cite{Torebek}, where the group structure on the domain significantly influenced the proofs.

	\begin{theorem}\label{subcritical3}
		Let $p < \frac{2^n-1}{2^n-3}$, $X_1 = \frac{\partial}{\partial x_1}$ and $X_2 = x_1 \frac{\partial}{\partial x_2} + x_1^2 \frac{\partial}{\partial x_3} + \dots x_1^{n-1} \frac{\partial}{\partial x_n}$ on $\R^n$.
		If $f \in L^1_{loc}(\R^n)$ with $\int_{\R^n} f(x) dx > 0$, then (\ref{MainProblem}) does not admit  a global in time weak solution.
	\end{theorem}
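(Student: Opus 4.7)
The strategy is to imitate the proof of \Cref{subcritical1}, replacing the Grushin distance by a distance function adapted to the anisotropic homogeneity of $X_2=\sum_{i=2}^n x_1^{i-1}\partial_{x_i}$. In the spirit of \Cref{remark}, I would assume for contradiction that a global in time weak solution $u$ exists, test against a non-negative $\psi\in C^1_c([0,T);C^2_c(\R^n))$ which vanishes at $t=0$ so that $\int_{\R^n} u_0 \psi(0,x)$ drops out, and then apply the $\frac{p}{2}$-Young inequality to obtain
\[
\int_0^T \int_{\R^n} f\psi \;\le\; C\int_0^T\int_{\R^n}\Bigl(|\psi|^{-\frac{1}{p-1}}|\Delta_X\psi|^{\frac{p}{p-1}} + |\psi|^{-\frac{1}{p-1}}|\psi_t|^{\frac{p}{p-1}}\Bigr).
\]

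The decisive new ingredient is the choice
\[
d(x) = \sum_{i=1}^n x_i^{2^{n+1-i}},\qquad P(x) = \frac{d(x)}{T^{2^{n-1}}},
\]
for which every summand $x_i^{2^{n+1-i}}$ has the same homogeneity $T^{2^{n-1}}$ under the anisotropic dilation $x_i \mapsto T^{2^{i-2}} x_i'$, thanks to the identity $2^{i-1}\cdot 2^{n+1-i} = 2^n$. I would set $\psi(t,x) = \Phi_1(t/T)\,\Phi_2(P(x))$ with $\Phi_1\in C^\infty(\R)$ non-negative and supported in $(0,1)$, and $\Phi_2 \in C^2_c(\R^+)$ the usual non-increasing cutoff equal to $1$ on $[0,1]$ and vanishing on $[2,\infty)$, both satisfying the integrability requirements $\int_0^1 |\Phi_1'|^{p/(p-1)}\Phi_1^{-1/(p-1)} < \infty$ and $\int_{0\le P'\le 1} |\Phi_2'+\Phi_2''|^{p/(p-1)} \Phi_2^{-1/(p-1)} < \infty$. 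The support of $\Delta_X\psi$ then lies inside $\{0\le t\le T,\ 1\le P(x)\le 2\}$, where $|x_i|\le C\, T^{2^{i-2}}$.

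The core calculation is the uniform estimate $|\Delta_X\psi| \le \frac{C}{T}\,|\Phi_1(t/T)|\,|(\Phi_2'+\Phi_2'')(P(x))|$ on this support. Writing $\Psi(a):= \Phi_2(a/T^{2^{n-1}})$ gives $|\Psi'|\lesssim T^{-2^{n-1}}$ and $|\Psi''|\lesssim T^{-2^n}$, and $\Delta_X\psi = \Phi_1\sum_{k=1}^2 \bigl(\Psi''(d)(X_k d)^2 + \Psi'(d)\,X_k^2 d\bigr)$. Direct expansion gives $X_1 d = 2^n x_1^{2^n-1}$ and $X_2^2 d = \sum_{i=2}^n 2^{n+1-i}(2^{n+1-i}-1)\,x_1^{2(i-1)} x_i^{2^{n+1-i}-2}$; on the support the $i$-th summand of $\Psi'(d)X_2^2 d$ scales as $T^{-2^{n-1}+(i-1)+2^{n-1}-2^{i-1}} = T^{(i-1)-2^{i-1}} \le T^{-1}$ for every $i\ge 2$, with the bound saturated at $i=2$. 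Analogous checks handle the $(X_2 d)^2$ cross terms and both $X_1$-terms, and $|\psi_t|\le\frac{C}{T}|\Phi_1'(t/T)|\Phi_2(P(x))$ is immediate. The change of variables $t'=t/T$, $x_i'=x_i/T^{2^{i-2}}$, whose Jacobian equals $T\cdot T^{(2^n-1)/2}$, then bounds both right-hand-side integrals by $C\,T^{1+(2^n-1)/2-p/(p-1)}$, while the left-hand side is $\ge CT\int_{\R^n} f\,\Phi_2(P(x))$. Dividing by $T$ and sending $T\to\infty$, with dominated convergence producing $\int_{\R^n}f\,\Phi_2(P(x))\to\int_{\R^n}f$, forces $\int_{\R^n} f \le 0$ whenever $\frac{p}{p-1} > \frac{2^n-1}{2}$, which is exactly the assumption $p<\frac{2^n-1}{2^n-3}$, contradicting the hypothesis.

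The main technical obstacle is the term-by-term verification of the uniform $O(T^{-1})$ bound on $|\Delta_X\psi|$. The vector field $X_2$ couples the $x_1$-direction to all higher coordinates, so that $X_2^2 d$ contains $n-1$ distinct monomials and $(X_2 d)^2$ contains $O(n^2)$ cross terms, each with its own explicit power of $T$ in the anisotropic scaling. Showing that all of them are subordinate to the $i=j=2$ contribution --- which saturates at precisely $T^{-1}$ and thereby fixes the effective homogeneous dimension at $2^n-1$ rather than $n(n+1)/2$ --- reduces to the identity $2^{i-1}\cdot 2^{n+1-i} = 2^n$ combined with the monotonicity $i - 2^{i-1}\le 0$ for $i\ge 2$.
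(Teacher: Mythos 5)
Your proposal is correct and follows essentially the same route as the paper's proof: the same anisotropic distance $d(x)=\sum_{i=1}^n x_i^{2^{n+1-i}}$, the same separated test function $\Phi_1(t/T)\Phi_2(d(x)/T^{2^{n-1}})$ with $\Phi_1$ supported in $(0,1)$ to kill the $u_0$ term, the same rescaling $x_i'=x_i/\sqrt{T}^{2^{i-1}}$ with Jacobian $T\cdot\sqrt{T}^{2^n-1}$, and the same final comparison of exponents. Your term-by-term verification that $|\Delta_X\psi|=O(T^{-1})$, via $(i-1)-2^{i-1}\le -1$ for $i\ge 2$, is in fact more explicit than what the paper records.
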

	\begin{proof}
		Let us suppose on the contrary that $u$ is a global in time weak solution.
		As in \Cref{subcritical1}, for a test function $\psi$ which is zero at time $t = 0$, we obtain	\begin{eqnarray}\label{Inequality}
			\int_{0}^T \int_{\R^n} f \psi + \int_{0}^T \int_{\R^n} |u|^p \psi & = & - \int_{0}^T \int_{\R^n} u \psi_t   - \int_{0}^T \int_{\R^n} u \Delta_X \psi \nonumber  \\
			& \leq &  \int_{0}^T \int_{\R^n} | u || \psi_t| + |u|| \Delta_X \psi|.
		\end{eqnarray}
		Let $$d(x) = x_1^{2^{n}} + x_2^{2^{n-1}} + x_3^{2^{n-2}} + \dots + x_n^{2} \text{ and } f(a) = \frac{a}{T^{2^{n-1}}}, \text{ for every } a \in \R.$$
		Define a test function with separated variables as $$\psi_T(t,x) = \Phi_1(t/T) \Phi_2((f \circ d)(x)),$$ where $0 \lneq \Phi_1 \in C^\infty (\R)$ with $\text{supp}(\Phi_1) \seq (0,1)$ satisfies the estimate $$\int_0^1  |\Phi_1(t)|^{\frac{-1}{p-1}}| \Phi_1'(t)|^{\frac{p}{p-1}}  dt < \infty$$ and \begin{equation*}
			\Phi_2(z) = \begin{cases} 
				1, & z \in [0,1],\\
				\searrow, & z \in [1, 2],\\
				0, & z \in (2, \infty),
			\end{cases}
		\end{equation*}
		is a function in $C^2_0(\R^+)$ satisfying the estimate $$\int_{0 \leq P'(x) \leq 1} \left| \frac{(\Phi_2'+ \Phi_2'')^p(P'(x))}{\Phi_2(P'(x))}   \right|^{\frac{1}{p-1}} < \infty,$$  where  $P'(x) := (x_1')^{2^n} + (x_2')^{2^{n-1}} + (x_3')^{2^{n-2}} + \dots + (x_n')^{2}$ with $x_i':= \frac{x_i}{\sqrt{T}^{2^{i-1}}}$ for every $1 \leq i \leq n$.
		Note that $$\text{supp}(\psi_T) \seq \left\{ (t,x) \in \R^+ \times \R^n : 0 \leq \frac{t}{T} \leq 1, 0 \leq P(x) \leq 2 \right\}.$$

		Now $$\Delta_X \psi_T(t,x) = \Phi_1(t/T) \Delta_X ((\Phi_2 \circ f) \circ d)(x)$$ and for $\Psi := \Phi_2 \circ f$ we obtain 
		\begin{eqnarray*}
			\Delta_X ((\Phi_2 \circ f) \circ d) &=& \Delta_X (\Psi \circ d) =  \sum_{k=1}^2 \left(  X_k^2 (\Psi \circ d)  \right)\\
			&=& \sum_{k=1}^2 \left( \Psi''(d)  (X_k(d))^2 + \Psi'(d) X_k^2(d)  \right)
		\end{eqnarray*}implying that $$\text{supp}(\Delta_X \psi_T) = \left\{ (t,x) \in \R^+ \times \R^n : 0 \leq \frac{t}{T} \leq 1 \leq (f \circ d )(x) \leq 2 \right\}.$$
		Note that $$ \Psi''(d) = (\Phi_2 \circ f)''(d) = \left(\frac{\Phi_2'(f)(d)}{T^{2^{n-1}}}\right)' = \frac{\Phi_2''(f)(d)}{T^{2^{n}}}.$$
		Therefore, for $x \in \text{supp}(\Delta_X \psi_T)$ we have $x_i \leq C \sqrt{T}^{2^{i-1}}$ for every $1 \leq i \leq n$.
		Since $$X_1(d(x)) = 2^n x_1^{2^n-1} ~ \text{ , } ~ X_2(d(x)) =  x_1 2^{n-1} x_2^{2^{n-1}-1} + 2^{n-2} x_1^2 x_3^{2^{n-2}-1} + \dots + 2 x_1^{n-1} x_n,$$ $$X_1^2 (d(x)) = 2^n (2^n-1) x_1^{2^n-2}$$ and $$X_2^2 (d(x)) = x_1^2 2^{n-1} (2^{n-1}-1) x_2^{2^{n-1}-2} + 2^{n-2} (2^{n-2}-1) x_1^4 x_3^{2^{n-2}-2} + \dots + 2 x_1^{2n-2},$$ we have
		\begin{eqnarray*}
			\Delta_X ((\Phi_2 \circ f) \circ d) &=&  \sum_{k=1}^2 \left( \Psi''(d)  (X_k(d))^2 + \Psi'(d) X_k^2(d)  \right)\\
			&=& \frac{\Phi_2''(f)(d)}{T^{2^{n+1}}} \left( (2^n x_1^{2^n-1})^2 + (x_1 2^{n-1} x_2^{2^{n-1}-1}  + \dots + 2 x_1^{n-1} x_n)^2 \right) \\ 
			&+& \left(\frac{\Phi_2'(f)(d)}{T^{2^n}}\right) \left( 2^n (2^n-1) x_1^{2^n-2} + x_1^2 2^{n-1} (2^{n-1}-1) x_2^{2^{n-1}-2} + \right. \\ 
			&& \left. 2^{n-2} (2^{n-2}-1) x_1^4 x_3^{2^{n-2}-2} + \dots + 2 x_1^{2n-2} \right)
		\end{eqnarray*}
		and hence $$\left|\Delta_X ((\Phi_2 \circ f) \circ d)  \right|\leq \frac{C |(\Phi_2' + \Phi_2'')(f)(d)|}{T}.$$
		Also, $$(\psi_T)_t(t,x) = \frac{\partial}{\partial t}\left(\Phi_1\left(\frac{t}{T}\right) \Phi_2\left((f \circ d )(x)\right)\right) = \frac{\Phi_1'(t/T)\Phi_2\left((f \circ d )(x)\right) }{T}$$ implies that $$\text{supp}((\psi_T)_t) \seq \left\{ (t,x) \in \R^+ \times \R^n : 0 \leq \frac{t}{T} \leq 1, 0 \leq (f \circ d )(x) \leq 2 \right\}$$ and  $$\left| (\psi_T)_t \right| \leq \frac{C}{T}.$$

		As in \Cref{subcritical}, one obtains
		\[\int_{0}^T \int_{\R^n} f \psi_T \leq C \Big( \int_{0}^T \int_{\R^n} |\psi_T|^{\frac{-1}{p-1}}|\Delta_X \psi_T|^{\frac{p}{p-1}} + |\psi_T|^{\frac{-1}{p-1}}|(\psi_T)_t|^{\frac{p}{p-1}} \Big).\]
		Now, for $t' = \frac{t}{T}$, $P(x):= (f \circ d)(x)$ and $S := \text{supp}(\Delta_X \psi_T)$, we have \begin{eqnarray*}
			\int_{0}^T \int\limits_{\R^n} \psi_T|^{ \frac{-1}{p-1}}|\Delta_X \psi|^{\frac{p}{p-1}} 
			&=& \int_S |\psi_T|^{ \frac{-1}{p-1}}|\Delta_X \psi|^{\frac{p}{p-1}} \\
			& \leq& \int_S |\Phi_2(P(x)) |^{\frac{-1}{p-1}} \left(\frac{C |(\Phi_2' + \Phi_2'')(f)(d)|}{T}\right)^{\frac{p}{p-1}} |\Phi_1(t/T)| \\
			&= & C\left(\frac{1}{T}\right)^{\frac{p}{p-1}} \sqrt{T}^{1+2+2^2+ \dots + 2^{n-1}} T  \int\limits_{0 \leq P'(x) \leq 2} \left| \frac{(\Phi_2' + \Phi_2'')^p(P'(x))}{\Phi_2(P'(x))} \right|^{\frac{1}{p-1}} \\
			&= & C \left(\frac{1}{T}\right)^{\frac{p}{p-1}} \sqrt{T}^{2^n-1} T.
		\end{eqnarray*} 
		Similarly, for $S_1 : = \text{supp}((\psi_T)_t)$, we have
		\begin{eqnarray*}
			\int_{0}^T \int\limits_{\R^n} |\psi_T|^{\frac{-1}{p-1}}|(\psi_T)_t|^{\frac{p}{p-1}}
			&=&  \int_{S_1} |\psi_T|^{\frac{-1}{p-1}}|(\psi_T)_t|^{\frac{p}{p-1}}\\
			&=&  \int_{S_1}  |\Phi_2(P(x)) \Phi_1(t/T)|^{\frac{-1}{p-1}} \left|\frac{1}{T}  \Phi_2(P(x)) \Phi_1'(t/T)\right|^{\frac{p}{p-1}} \\
			& = &  \left(\frac{1}{T}\right)^{{\frac{p}{p-1}}} \int_{S_1} |\Phi_2(P(x))|  |\Phi(t/T)|^{\frac{-1}{p-1}}|  \Phi_1'(t/T)|^{\frac{p}{p-1}}\\
			& = & \left(\frac{2}{T}\right)^{{\frac{p}{p-1}}} \sqrt{T}^{2^n-1} T \int_{0}^1 \int\limits_{0 \leq P'(x) \leq 2} |\Phi_2(P'(x))| |\Phi_1(t')|^{\frac{-1}{p-1}}| \Phi_1'(t')|^{\frac{p}{p-1}}.
		\end{eqnarray*}
		
		As, \begin{equation*}
			\int_{0}^T \int\limits_{\R^n} f \psi_T = \int_{0}^T \Phi_1(t/T) \int\limits_{\R^n} f \Phi_2(P(x))  =  T \int_{0}^1 \Phi_1(t') \int\limits_{\R^n} f \Phi_2(P(x)) = C T \int\limits_{\R^n} f \Phi_2(P(x))
		\end{equation*}
		we obtain $$\int_{\R^n} f \Phi_2(P(x)) \leq C T^{-1} \int_{0}^T \int_{\R^n} f \psi_T \leq C T^{ \frac{2^n-1}{2} - \frac{p}{p-1}}.$$
		By dominated convergence theorem, $ \int_{\R^n} f \Phi_2 \to \int_{\R^n} f $ as $T \to \infty$.
		Using the fact that $p < \frac{2^n-1}{2^n-3}$ and taking $T \to \infty$ we obtain  $\int_{\R^n} f \leq 0$.
		This contradicts the hypothesis that $\int_{\R^n} f > 0$.
		So, no such $u$ exists and the statement is proved.
	\end{proof}
	
	\begin{cor}\label{BlowUpTime3}
		Under the terminology and assumptions of \Cref{subcritical3}, assume further that $0 \leq f \in L^1_{loc}(\R^n)$ and  let there be an $\epsilon >0$ such that  $f(x) \geq \epsilon d(x)^{- \lambda}$ whenever $ d(x) \geq 1$ with $ \lambda \in \left(0, \frac{p}{(2^{n-1})(p-1)}\right)$.
		Let $T_\epsilon$ denote a time such that a weak solution exists in $[0, T_\epsilon)$.
		Then there exists a positive constant $C$ such that $T_\epsilon \leq C \epsilon^{1/\left((2^{n-1}) \lambda - \frac{p}{p-1}\right)}$.
	\end{cor}
	\begin{proof}
		The proof follows along the same lines of \Cref{BlowUpTime}, hence the details are skipped.
	\end{proof}

	Going forward, we will refrain from providing detailed proofs and instead focus solely on presenting the key steps of the proof. A detailed proof can be constructed following the approach outlined in the prro of \Cref{subcritical3}.

	\begin{theorem}\label{subcritical51}
		Let $p < \frac{7}{5}$, $X_1 = \frac{\partial}{\partial x_1}$ and $X_2 = x_1 x_2 \frac{\partial}{\partial x_3}$ on $\R^3$.
		If $f \in L^1_{loc}(\R^3)$ with $\int_{\R^3} f(x) dx > 0$, then (\ref{MainProblem}) does not admit global in time weak solution.
	\end{theorem}
	\begin{proof}
		Let us suppose on the contrary that $u$ is a global in time weak solution.
		For the test function $\psi$ defined below, we have
		\[\int_{0}^T \int_{\R^2} f \psi_T \leq C \Big( \int_{0}^T \int_{\R^2} |\psi_T|^{\frac{-1}{p-1}}|\Delta_X \psi_T|^{\frac{p}{p-1}} + |\psi_T|^{\frac{-1}{p-1}}|(\psi_T)_t|^{\frac{p}{p-1}} \Big).\]
		Let $$d(x) = x_1^{16} + x_2^{8} + x_3^4 \text{ and } f(a) = \frac{a}{T^{8}}, \text{ for } a \in \R.$$
		Define a test function with separated variables as $$\psi_T(t,x) = \Phi_1(t/T) \Phi_2((f \circ d)(x)),$$ where $\Phi_1$ is as earlier and $\Phi_2$ satisfies the estimate $$\int_{0 \leq P'(x) \leq 1} \left| \frac{(\Phi_2'+ \Phi_2'')^p(P'(x))}{\Phi_2(P'(x))}   \right|^{\frac{1}{p-1}}   < \infty$$  where  $P'(x) := (x_1')^{16} + (x_2')^8 + x_3'^4$ with $x_1':= \frac{x_1}{\sqrt{T}}$, $x_2':= \frac{x_2}{T}$ and $x_3' = \frac{x_3}{T^{2}}$.
		For $x \in \text{supp}(\Delta_X \psi_T)$ we have $|x_1| \leq C \sqrt{T}$ and $|x_2| \leq C T$, $|x_3|\leq C T^2$ and hence $$\left|\Delta_X ((\Phi_2 \circ f) \circ d)  \right|\leq \frac{C \left( \Phi_2'((f \circ d)(x)) + \Phi_2''((f \circ d)(x)) \right)}{T}.$$
		Similarly, 
		$$\left| (\psi_T)_t \right| \leq \frac{C}{T}.$$
		Changing the variables $x \leftrightarrow x'$ and $t \leftrightarrow t':= \frac{t}{T}$ gives \begin{eqnarray*}
			\int_{0}^T \int\limits_{\R^2} |\psi_T|^{ \frac{-1}{p-1}}|\Delta_X \psi_T|^{\frac{p}{p-1}}
			&\leq &  C\left(\frac{1}{T}\right)^{\frac{p}{p-1}} T^{\frac{9}{2}}.
		\end{eqnarray*} 
		and  
		\begin{eqnarray*}
			\int_{0}^T \int\limits_{\R^2} 	|\psi_T|^{\frac{-1}{p-1}}|(\psi_T)_t|^{\frac{p}{p-1}}
			&\leq & C \left(\frac{1}{T}\right)^{\frac{p}{p-1}} T^{\frac{9}{2}}.
		\end{eqnarray*}
		
		Since \begin{equation*}
			\int_{0}^T \int\limits_{\R^2} f \psi_T =  T \int_{0}^1 \Phi_1(t') \int\limits_{\R^2} f \Phi_2(P(x)) = C T \int\limits_{\R^2} f \Phi_2(P(x)),
		\end{equation*}
		we obtain $$\int_{\R^2} f \Phi_2(P(x)) \leq C T^{-1}\int_{0}^T \int_{\R^2} f \psi_T \leq C  T^{\frac{7}{2} - \frac{p}{p-1}} .$$
		Using the fact that $p < \frac{7}{5}$ and taking $T \to \infty$ we obtain  $\int_{\R^2} f \leq 0$.
		This contradicts the hypothesis that $\int_{\R^2} f > 0$.
		So, no such $u$ exists and the statement is proved.
	\end{proof}
	
	\begin{cor}\label{BlowUpTime51}
		Under the terminology and assumptions of \Cref{subcritical51}, assume further that $0 \leq f \in L^1_{loc}(\R^n)$ and  let there be an $\epsilon >0$ such that  $f(x) \geq \epsilon d(x)^{- \lambda}$ whenever $ d(x) \geq 1$ with $ \lambda \in \left(0, \frac{p}{8(p-1)}\right)$.
		Let $T_\epsilon$ denote a time such that a weak solution exists in $[0, T_\epsilon)$.
		Then there exists a positive constant $C$ such that $T_\epsilon \leq C \epsilon^{1/\left(8 \lambda - \frac{p}{p-1}\right)}$.
	\end{cor}
	\begin{proof}
		The proof follows along the same lines of \Cref{BlowUpTime}, hence the details are skipped.
	\end{proof}

	\begin{theorem}\label{subcritical53}
		Let $p < \frac{n^2+n}{n^2 + n - 4}$, $X_1 = \frac{\partial}{\partial x_1}$ and $X_2 = x_1  \frac{\partial}{\partial x_2} + x_2 \frac{\partial}{\partial x_3} + \dots + x_{n-1} \frac{\partial}{\partial x_n}$  on $\R^n$.
		If $f \in L^1_{loc}(\R^n)$ with $\int_{\R^n} f(x) dx > 0$, then (\ref{MainProblem}) does not admit global in time weak solution.
	\end{theorem}
	\begin{proof}
		Let us suppose on the contrary that $u$ is a global in time weak solution.
		For the test function $\psi$ defined below, we have 
		\[\int_{0}^T \int_{\R^2} f \psi_T \leq C \Big( \int_{0}^T \int_{\R^2} |\psi_T|^{\frac{-1}{p-1}}|\Delta_X \psi_T|^{\frac{p}{p-1}} + |\psi_T|^{\frac{-1}{p-1}}|(\psi_T)_t|^{\frac{p}{p-1}} \Big).\]
		Let $$d(x) = x_1^{2n!} + x_2^{\frac{2n!}{2}} + x_3^{\frac{2n!}{3}} + \dots + x_n^{\frac{2n!}{n}} \text{ and } f(a) = \frac{a}{T^{n!}}, \text{ for } a \in \R.$$
		Define a test function with separated variables as $$\psi_T(t,x) = \Phi_1(t/T) \Phi_2((f \circ d)(x)),$$ where $\Phi_1$ is as earlier and $\Phi_2$ satisfies the estimate  $$\int_{0 \leq P'(x) \leq 1} \left| \frac{(\Phi_2'+ \Phi_2'')^p(P'(x))}{\Phi_2(P'(x))}   \right|^{\frac{1}{p-1}}   < \infty$$  where  $P'(x) := x_1'^{2n!} + x_2'^{\frac{2n!}{2}} + x_3'^{\frac{2n!}{3}} + \dots + x_n'^{\frac{2n!}{n}}$ with $x_i':= \frac{x_i}{T^{i/2}}$ for $1 \leq i \leq n$.
		For $x \in \text{supp}(\Delta_X \psi_T)$ and each $1 \leq i \leq n$ we have $|x_i| \leq C T^{\frac{i}{2}}$.
		Hence $$\left|\Delta_X ((\Phi_2 \circ f) \circ d)  \right|\leq \frac{C \left( \Phi_2'((f \circ d)(x)) + \Phi_2''((f \circ d)(x)) \right)}{T}$$ and similarly $$\left| (\psi_T)_t \right| \leq \frac{C}{T}.$$

		Changing the variable $x \leftrightarrow x'$ and $t \leftrightarrow t':= \frac{t}{T}$ gives \begin{eqnarray*}
			\int_{0}^T \int\limits_{\R^2} |\psi_T|^{ \frac{-1}{p-1}}|\Delta_X \psi_T|^{\frac{p}{p-1}}
			&\leq &  C\left(\frac{1}{T}\right)^{\frac{p}{p-1}} T^{\frac{n (n+1) + 4}{4}}
		\end{eqnarray*} 
		and 
		\begin{eqnarray*}
			\int_{0}^T \int\limits_{\R^2} 	|\psi_T|^{\frac{-1}{p-1}}|(\psi_T)_t|^{\frac{p}{p-1}}
			&\leq & C \left(\frac{1}{T}\right)^{\frac{p}{p-1}} T^{\frac{n (n+1) + 4}{4}}.
		\end{eqnarray*}
		
		Since \begin{equation*}
			\int_{0}^T \int\limits_{\R^2} f \psi_T =  C T \int\limits_{\R^2} f \Phi_2(P(x)),
		\end{equation*}
		we obtain $$\int_{\R^2} f \Phi_2(P(x)) \leq C T^{-1}\int_{0}^T \int_{\R^2} f \psi_T \leq C  T^{\frac{n (n+1) }{4} - \frac{p}{p-1}} .$$
		The assumption that $p < \frac{n^2+n}{n^2 + n - 4}$	now contradicts the hypothesis that $\int_{\R^2} f > 0$.
		So, no such $u$ exists and the statement is proved.
	\end{proof}
	
	Note that since $n^2 + n$ is always even, $\frac{n^2+n}{n^2 + n - 4}$ is still of the form $\frac{\alpha}{\alpha-2}$ for some $\alpha$.

	\begin{cor}\label{BlowUpTime53}
		Under the terminology and assumptions of \Cref{subcritical53}, assume further that $0 \leq f \in L^1_{loc}(\R^n)$ and  let there be an $\epsilon >0$ such that  $f(x) \geq \epsilon d(x)^{- \lambda}$ whenever $ d(x) \geq 1$ with $ \lambda \in \left(0, \frac{p}{n!(p-1)}\right)$.
		Let $T_\epsilon$ denote a time such that a weak solution exists in $[0, T_\epsilon)$.
		Then there exists a positive constant $C$ such that $T_\epsilon \leq C \epsilon^{1/\left(n! \lambda - \frac{p}{p-1}\right)}$.
	\end{cor}
	
	\begin{proof}
		The proof follows along the same lines of \Cref{BlowUpTime}, hence the details are skipped.
	\end{proof}

	\begin{theorem}\label{subcritical54}
		Let $p < \frac{3}{2}$, $X_1 = \frac{\partial}{\partial x_1}$, $X_2 =  \frac{\partial}{\partial x_2}$ and $X_3 =  (x_1^2 + x_2^2 + x_1 x_2) \frac{\partial}{\partial x_3}$ on $\R^3$.
		If $f \in L^1_{loc}(\R^3)$ with $\int_{\R^3} f(x) dx > 0$, then (\ref{MainProblem}) does not admit global in time weak solution.
	\end{theorem}
	\begin{proof}
		Let us suppose on the contrary that $u$ is a global in time weak solution.
		For the test function $\psi$ defined below, we have
		\[\int_{0}^T \int_{\R^2} f \psi_T \leq C \Big( \int_{0}^T \int_{\R^2} |\psi_T|^{\frac{-1}{p-1}}|\Delta_X \psi_T|^{\frac{p}{p-1}} + |\psi_T|^{\frac{-1}{p-1}}|(\psi_T)_t|^{\frac{p}{p-1}} \Big).\]
		Let $$d(x) = x_1^{8} + x_2^{8} + x_3^2 \text{ and } f(a) = \frac{a}{T^{4}}, \text{ for } a \in \R.$$
		Define a test function with separated variables as $$\psi_T(t,x) = \Phi_1(t/T) \Phi_2((f \circ d)(x)),$$ where $\Phi_1$ is as earlier and $\Phi_2$ satisfies the estimate  $$\int_{0 \leq P'(x) \leq 1} \left| \frac{(\Phi_2'+ \Phi_2'')^p(P'(x))}{\Phi_2(P'(x))}   \right|^{\frac{1}{p-1}}   < \infty$$  where  $P'(x) := x_1'^{8} + x_2'^{8} + x_3'^2$ with $x_1':= \frac{x_1}{\sqrt{T}}$, $x_2':= \frac{x_2}{\sqrt{T}}$ and $x_3':= \frac{x_3}{T^2}$.
		
		For $x \in \text{supp}(\Delta_X \psi_T)$ we have $|x_1| \leq C \sqrt{T}$, $|x_2| \leq C \sqrt{T}$ and $|x_3| \leq C T^2$.
		Hence $$\left|\Delta_X ((\Phi_2 \circ f) \circ d)  \right|\leq \frac{C \left( \Phi_2'((f \circ d)(x)) + \Phi_2''((f \circ d)(x)) \right)}{T}$$ and similarly $$\left| (\psi_T)_t \right| \leq \frac{C}{T}.$$
		Changing the variable $x \leftrightarrow x'$ and $t \leftrightarrow t':= \frac{t}{T}$ gives \begin{eqnarray*}
			\int_{0}^T \int\limits_{\R^2} |\psi_T|^{ \frac{-1}{p-1}}|\Delta_X \psi_T|^{\frac{p}{p-1}}
			&\leq &  C\left(\frac{1}{T}\right)^{\frac{p}{p-1}} T^4
		\end{eqnarray*} 
		and
		\begin{eqnarray*}
			\int_{0}^T \int\limits_{\R^2} 	|\psi_T|^{\frac{-1}{p-1}}|(\psi_T)_t|^{\frac{p}{p-1}}
			&\leq & C \left(\frac{1}{T}\right)^{\frac{p}{p-1}} T^4.
		\end{eqnarray*}
		
		Since \begin{equation*}
			\int_{0}^T \int\limits_{\R^2} f \psi_T   =  T \int_{0}^1 \Phi_1(t') \int\limits_{\R^2} f \Phi_2(P(x)) = C T \int\limits_{\R^2} f \Phi_2(P(x)),
		\end{equation*}
		we obtain $$\int_{\R^2} f \Phi_2(P(x)) \leq C T^{-1}\int_{0}^T \int_{\R^2} f \psi_T \leq C  T^{3 - \frac{p}{p-1}} .$$
		Using the fact that $p < \frac{3}{2}$ and taking $T \to \infty$ we obtain  $\int_{\R^2} f \leq 0$.
		This contradicts the hypothesis that $\int_{\R^2} f > 0$.
		So, no such $u$ exists and the statement is proved.
	\end{proof}
	
	\begin{cor}\label{BlowUpTime54}
		Under the terminology and assumptions of \Cref{subcritical54}, assume further that $0 \leq f \in L^1_{loc}(\R^n)$ and  let there be an $\epsilon >0$ such that  $f(x) \geq \epsilon d(x)^{-\lambda}$ whenever $ d(x) \geq 1$ with $\lambda \in \left(0, \frac{p}{4(p-1)}\right)$.
		Let $T_\epsilon$ denote a time such that a weak solution exists in $[0, T_\epsilon)$.
		Then there exists a positive constant $C$ such that $T_\epsilon \leq C \epsilon^{1/\left(4 \lambda - \frac{p}{p-1}\right)}$.
	\end{cor}
	\begin{proof}
		The proof follows along the same lines of \Cref{BlowUpTime}, hence the details are skipped.
	\end{proof}

	\begin{theorem}\label{subcritical55}
		Let $X_1 = \frac{\partial}{\partial x_1}$ and $X_j = x_1^{k_{j,1}} x_2^{k_{j,2}} \dots x_{j-1}^{k_{j,j-1}} \frac{\partial}{\partial x_j}$ on $\R^n$ and $$p < \frac{n + \sum_{j=2}^n \sum_{i =1 }^{j-1} \left( \sum_{l_1 = i}^{j-1} \sum_{l_2 = i-1}^{l_1-1} \sum_{l_3 = i-2}^{l_2-1} \dots \sum_{l_i = 1}^{l_{i-1} - 1} k_{j, l_1} k_{l_1, l_2} k_{l_2, l_3} \dots k_{l_i, l_{i-1}} \right)}{n - 2 + \sum_{j=2}^n \sum_{i =1 }^{j-1} \left( \sum_{l_1 = i}^{j-1} \sum_{l_2 = i-1}^{l_1-1} \sum_{l_3 = i-2}^{l_2-1} \dots \sum_{l_i = 1}^{l_{i-1} - 1} k_{j, l_1} k_{l_1, l_2} k_{l_2, l_3} \dots k_{l_i, l_{i-1}} \right) }.$$
		If $f \in L^1_{loc}(\R^n)$ with $\int_{\R^n} f(x) dx > 0$, then (\ref{MainProblem}) does not admit global in time weak solution.
	\end{theorem}
	\begin{proof}
		Let us suppose on the contrary that $u$ is a global in time weak solution.
		For the test function $\psi$ defined below, we have
		\[\int_{0}^T \int_{\R^2} f \psi_T \leq C \Big( \int_{0}^T \int_{\R^2} |\psi_T|^{\frac{-1}{p-1}}|\Delta_X \psi_T|^{\frac{p}{p-1}} + |\psi_T|^{\frac{-1}{p-1}}|(\psi_T)_t|^{\frac{p}{p-1}} \Big).\]
		Let $$d(x) = x_1^{2 r_1} + x_2^{2 r_2} + \dots x_n^{2 r_n} \text{ and } f(a) = \frac{a}{T^{r_1}}, \text{ for } a \in \R,$$ where, for every $2 \leq j \leq n$ we have  $$r_j = \frac{r_1}{1 + R_{j,1} + R_{j,2} + \dots R_{j, (j-1)}}$$ with, for every $1 \leq i \leq n$ we define $$R_{j,s}:= \sum_{l_1 = i}^{j-1} \sum_{l_2 = i-1}^{l_1-1} \sum_{l_3 = i-2}^{l_2-1} \dots \sum_{l_i = 1}^{l_{s-1}-1} k_{j, l_1} k_{l_1, l_2} k_{l_2, l_3} \dots k_{l_i, l_{i-1}} .$$
		
		Define a test function with separated variables as $$\psi_T(t,x) = \Phi_1(t/T) \Phi_2((f \circ d)(x)),$$ where $\Phi_1$ is as earlier and $\Phi_2$ satisfies the estimate  $$\int_{0 \leq P'(x) \leq 1} \left| \frac{(\Phi_2'+ \Phi_2'')^p(P'(x))}{\Phi_2(P'(x))}   \right|^{\frac{1}{p-1}}   < \infty$$  where   $P'(x) := x_1'^{2 r_1} + x_2'^{2 r_2} + \dots x_n'^{2 r_n}$ with $x_j':= \frac{x_1}{T^{\frac{r_1}{2 r_j}}}$ for every $1 \leq j \leq n$.
		For $x \in \text{supp}(\Delta_X \psi_T)$ and every $1 \leq j \leq n$ we have $|x_j| \leq C T^{\frac{r_1}{2 r_j}}$ and hence $$\left|\Delta_X ((\Phi_2 \circ f) \circ d)  \right|\leq \frac{C \left( \Phi_2'((f \circ d)(x)) + \Phi_2''((f \circ d)(x)) \right)}{T}.$$
		Similarly, $$\left| (\psi_T)_t \right| \leq \frac{C}{T}.$$
		
		Changing the variables $x \leftrightarrow x'$ and $t \leftrightarrow t':= \frac{t}{T}$ gives \begin{eqnarray*}
			\int_{0}^T \int\limits_{\R^2} |\psi_T|^{ \frac{-1}{p-1}}|\Delta_X \psi_T|^{\frac{p}{p-1}}
			&\leq &  C\left(\frac{1}{T}\right)^{\frac{p}{p-1}} T^{\frac{n + 2 + \sum_{j=2}^n \sum_{i =1 }^{j-1} R_{j, i} }{2}}.
		\end{eqnarray*} 
		and 
		\begin{eqnarray*}
			\int_{0}^T \int\limits_{\R^2} 	|\psi_T|^{\frac{-1}{p-1}}|(\psi_T)_t|^{\frac{p}{p-1}}
			&\leq & C \left(\frac{1}{T}\right)^{\frac{p}{p-1}} T^{\frac{n + 2 + \sum_{j=2}^n \sum_{i =1 }^{j-1} R_{j, i} }{2}}.
		\end{eqnarray*}
		
		Since \begin{equation*}
			\int_{0}^T \int\limits_{\R^2} f \psi_T =  T \int_{0}^1 \Phi_1(t') \int\limits_{\R^2} f \Phi_2(P(x)) = C T \int\limits_{\R^2} f \Phi_2(P(x)),
		\end{equation*}
		we obtain $$\int_{\R^2} f \Phi_2(P(x)) \leq C T^{-1}\int_{0}^T \int_{\R^2} f \psi_T \leq C  T^{\frac{n + \sum_{j=2}^n \sum_{i =1 }^{j-1} R_{j, i} }{2} - \frac{p}{p-1}} .$$
		Using the hypothesis that $p < \frac{n + \sum_{j=2}^n \sum_{i =1 }^{j-1} R_{j, i}}{n - 2 +  \sum_{j=2}^n \sum_{i =1 }^{j-1} R_{j, i}}$ and taking $T \to \infty$ we obtain  $\int_{\R^2} f \leq 0$.
		This contradicts the hypothesis that $\int_{\R^2} f > 0$.
		So, no such $u$ exists and the statement is proved.
	\end{proof}

	\begin{cor}\label{BlowUpTime55}
		Under the terminology and assumptions of \Cref{subcritical51}, assume further that $0 \leq f \in L^1_{loc}(\R^n)$ and  let there be an $\epsilon >0$ such that  $f(x) \geq \epsilon d(x)^{- \lambda}$ whenever $ d(x) \geq 1$ with $ \lambda \in \left(0, \frac{p}{r_1(p-1)}\right)$.
		Let $T_\epsilon$ denote a time such that a weak solution exists in $[0, T_\epsilon)$.
		Then there exists a positive constant $C$ such that $T_\epsilon \leq C \epsilon^{1/\left(r_1 \lambda - \frac{p}{p-1}\right)}$.
	\end{cor}
	\begin{proof}
		The proof follows along the same lines of \Cref{BlowUpTime}, hence the details are skipped.
	\end{proof}
	
	\begin{cor}\label{subcritical1}
		Let $p < \frac{k+2}{k}$, $X_1 = \frac{\partial}{\partial x_1}$ and $X_2 = x_1^k \frac{\partial}{\partial x_2}$ on $\R^2$.
		If $f \in L^1_{loc}(\R^2)$ with $\int_{\R^2} f(x) dx > 0$, then (\ref{MainProblem}) does not admit global in time weak solution.
	\end{cor}
	\begin{proof}
		Although the proof follows directly from \Cref{subcritical55}, one may use  $$d(x) = x_1^{2k+2} + x_2^{2}$$ and  $$f(a) = \frac{a}{T^{k+1}}, \text{ for } a \in \R$$ and follow the same process to obtain the  result.
	\end{proof}
	
	\begin{cor}\label{BlowUpTime1}
		Under the terminology and assumptions of \Cref{subcritical1}, assume further that $0 \leq f \in L^1_{loc}(\R^n)$ and  let there be an $\epsilon >0$ such that  $f(x) \geq \epsilon d(x)^{- \lambda}$ whenever $ d(x) \geq 1$ with $ \lambda \in \left(0, \frac{p}{(k+1)(p-1)}\right)$.
		Let $T_\epsilon$ denote a time such that a weak solution exists in $[0, T_\epsilon)$.
		Then there exists a positive constant $C$ such that $T_\epsilon \leq C \epsilon^{1/\left((k+1) \lambda - \frac{p}{p-1}\right)}$.
	\end{cor}
	\begin{proof}
		The proof follows along the same lines of \Cref{BlowUpTime}, hence the details are skipped.
	\end{proof}

	\begin{cor}\label{subcritical52}
		Let $p < \frac{s_1 + s_2 + s_2 k + k + 3}{s_1 + s_2 + s_2 k + k + 1}$, $X_1 = \frac{\partial}{\partial x_1}$, $X_2 = x_1^k  \frac{\partial}{\partial x_2}$ and $X_3 = x_1^{s_1}x_2^{s_2}  \frac{\partial}{\partial x_3}$ on $\R^3$.
		If $f \in L^1_{loc}(\R^3)$ with $\int_{\R^3} f(x) dx > 0$, then (\ref{MainProblem}) does not admit global in time weak solution.
	\end{cor}
	\begin{proof}
		Although the proof follows directly from \Cref{subcritical55}, one may use  $$d(x) = x_1^{2(1+s_1+s_2+s_2k)} + x_2^{\frac{{2(1+s_1+s_2+s_2k)}}{k+1}} + x_3^2 \text{ and }   f(a) = \frac{a}{T^{1+s_1+s_2+s_2k}}, \text{ for } a \in \R$$ and follw the same process to obtain the result.
	\end{proof}

	\begin{cor}\label{BlowUpTime52}
		Under the terminology and assumptions of \Cref{subcritical52}, assume further that $0 \leq f \in L^1_{loc}(\R^n)$ and  let there be an $\epsilon >0$ such that  $f(x) \geq \epsilon d(x)^{- \lambda}$ whenever $ d(x) \geq 1$ with $ \lambda \in \left(0, \frac{p}{(1+s_1+s_2+s_2k)(p-1)}\right)$.
		Let $T_\epsilon$ denote a time such that a weak solution exists in $[0, T_\epsilon)$.
		Then there exists a positive constant $C$ such that $T_\epsilon \leq C \epsilon^{1/\left((1+s_1+s_2+s_2k)  \lambda - \frac{p}{p-1}\right)}$.
	\end{cor}
	
	\begin{proof}
		The proof follows along the same lines of \Cref{BlowUpTime}, hence the details are skipped.
	\end{proof}

	\begin{remark}
		In the following cases, the method used in this article does not work because no suitable $d$ can be chosen.
		\begin{enumerate}
			\item $X_1 = \frac{\partial}{\partial x_1}$, $X_2 =  x_1^k \frac{\partial}{\partial x_2}$ and $X_3 =  (x_1^2 + x_2^2 + x_1 x_2) \frac{\partial}{\partial x_3}$ on $\R^3$ whenever $k \neq 0$.
			\item $X_1= a(x_2, x_3) \frac{\partial}{\partial x_1}$ and $X_2 = x_1^k \frac{\partial}{\partial x_2}$ on $\R^3$ where $a(x_2, x_3)$ is a homogeneous polynomial and $k \in \N \cup \{0\}$.
		\end{enumerate}
	\end{remark}

	\section*{Funding}
	This research was funded by Nazarbayev University under Collaborative Research Program Grant 20122022CRP1601.
	
	\section*{Author contribution}
	This research is the outcome of discussions between the authors, with both contributing equally to every aspect of the work.
	
	\section*{Conflict of interest}
	The authors declare that they have no conflict of interest.
	
	\section*{Data availability}
	Data sharing does not apply to this manuscript as no datasets were generated or analyzed during the study.

\end{document}